\Crefname{figure}{}{}
\newtheorem{theorem}{Theorem}
\newtheorem{lemma}[theorem]{Lemma}
\newtheorem{cor}[theorem]{Corollary}
\newtheorem{assumption}{Assumption}
\newtheorem{remark}{Remark}%
\newtheorem{definition}{Definition}%
\newcommand{\R}{{\mathbb R}}
\newcommand{\N}{{\mathbb N}}
\newcommand{\expect}{\operatorname{\mathbb{E}}}
\DeclareMathOperator*{\argmin}{argmin}
\DeclareMathOperator{\sign}{\text{sign}}
\let\blx@rerun@biber\relax
\title{Explainable Learning Based Regularization of Inverse Problems}
\author[1, 2]{Martin Burger}
\affil[1]{Helmholtz Imaging, Deutsches Elektronen-Synchrotron DESY, Notkestr. 85, Hamburg, 22607,
Germany,}
\affil[ ]{\{martin.burger, samira.kabri, lukas.weigand\}@desy.de}
\affil[2]{Fachbereich Mathematik, Universit\"at Hamburg, Bundesstrasse 55, Hamburg, 20146, Germany}
\author[1]{Samira Kabri}
\author[3]{Gitta Kutyniok}
\affil[3]{Lehrstuhl für mathematische Grundlagen des Verständnisses der künstlichen \hphantom{Akadem} Intelligenz, Mathematisches Institut der Universität München, \hphantom{Akadem} Akademiestraße 7, 80799 München, Germany,}
\affil[ ]{kutyniok@math.lmu.de}
\author[4]{Yunseok Lee}
\affil[4]{yu2792304@gmail.com}
\author[1]{Lukas Weigand}
\date{}
\begin{document}

\maketitle

\abstract{Machine learning techniques for the solution of inverse problems have become an attractive approach in the last decade, while their theoretical foundations are still in their infancy. In this chapter we want to pursue the study of regularization properties, robustness, convergence rates, and structure of regularizers for inverse problems obtained from different learning paradigms. For this sake we study simple architectures that are explainable in the sense that they allow for a theoretical analysis also in the infinite-dimensional limit. 
In particular we will advance the study of spectral architectures with new results on convergence rates highlighting the role of the smoothness in the training data set, and a study of adversarial robustness. We can show that adversarial training is actually a convergent regularization method. Moreover, we discuss extensions to frame systems and CNN-type architectures for variational regularizers, where we obtain some results on their structure by carefully designed numerical experiments.}
\section{Introduction} 

Data-driven methods for the solution of inverse problems have gained strong attention in recent years due to the {  vast} advances made in (deep) machine learning (cf. e.g. \cite{arridge2019solving_MB}). Compared to other fields, inverse problems pose particular challenges due to their ill-posedness, complexity, and the absence of ground truth data for measurements and inverse solutions. For these reasons, the straightforward application of supervised learning techniques seems still out of reach for most practical applications. Semi-supervised learning approaches have emerged to an attractive alternative approach, which {  typically learn} a regularizer or some related features reminiscent of a proximal map of regularizers. Those are based either solely on suitable inverse solutions or non-favorable inverse solutions, e.g. obtained from unstable reconstructions.  

Despite some practical success of these approaches,  theoretical insights such as explanations of results achieved, an analysis of their robustness, or a thorough convergence analysis in terms of statistical regularization are hardly available. In this chapter we want to further discuss these theoretical issues by studying simple explainable architectures, which allow for a detailed analysis and also for some comparison with classical approaches in the regularization of inverse problems. In particular we shall review some results on spectral architectures related to the classical technique of singular value decomposition (cf. \cite{Kabri2023Convergent_MB, Burger2025Learned_MB}) and augment them by novel results such as convergence rates of learned regularizations. We also discuss how the spectral architectures and the theoretical insights can be generalized to frame systems. As we shall see, these architectures allow to derive deep theoretical insights into the regularization properties of learned methods, their robustness, and effects arising in the infinite-dimensional limit relevant in inverse problems. 

In addition we will also study adversarial examples in the framework of spectral architectures, which we understand in the sense of adversarial data (distributions) for the inverse problems. As mentioned above one has to deal with the issue of distribution shifts in these data in particular for supervised learning techniques and hence it is of particular importance to study the most severe effects on robustness stemming from single or population data drawn from a perturbed distribution. 

As a step towards practically used convolutional neural network (CNN) architectures, we finally study a CNN type approach mimicking multiresolution or total variation type regularization. We construct some computational studies that tackle the questions whether nonlocality of convolutional kernels is favorable and with which types of CNN kernels we genuinely end up in a more conventional setting of local kernels. While this chapter focuses on explainability through simple architectures, we refer to the open source python library DeepInverse \cite{tachella2025deepinverse_MB} for advanced deep neural network architectures for solving imaging inverse problems.

Before proceeding to the study of explainable architectures we shall give a short introduction to different learning paradigms for inverse problems in the next section.

\section{Learning paradigms for regularization methods in inverse problems}

Inverse problems are of central importance in many fields of science and technology, with image reconstruction in different modalities being a prominent example.  In general we are interested in abstract equations of the form
\begin{equation}
    A(x) = y,
\end{equation}
with $A$ being a forward operator whose inverse does not exist or is not continuous. The latter causes obvious difficulties in the solution process in particular when noisy data $y^\delta$ are available instead of $y$ (the superscript $\delta$ denoting a typical magnitude of the noise). The classical approach consists in constructing handcrafted regularization methods based on mathematical principles (cf. \cite{engl1996regularization_MB,benning2018modern_MB}). Recently a data driven construction of such methods received strong attention due to the increasing availability of potential training data and advances in deep learning implementations allowing to tackle larger scale problems. Due to the ill-posedness in the solution of inverse problems and the need to introduce a-priori knowledge anyway, such approaches appear rather natural and promising. In the following we give a high level overview of the different approaches, which we group into three different groups according to the paradigm of what to learn rather than by specific architectures how to do it.  

\subsection{Supervised learning}

The basis of supervised learning approaches is the availability of data pairs $(x_i,y_i^\delta)_{i=1,\ldots,N}$, from which the inverse solution is to be generalized. Since the training data ideally involves noisy sample this can be conceived as learning a regularization method, in particular if the learning is performed conditional on the noise strength $\delta$. With appropriate architectures $R_\theta(y)$ depending on learnable parameters $\theta$ one tries to parametrize the regularized inverse solution. As usual in supervised learning in arbitrary applications this is achieved by minimizing an overall loss of the form 
\begin{equation}
L(\theta) = \frac{1}N \sum_{i=1}^N \ell(R_\theta(y_i^\delta),x_i).
\end{equation}
Here $\ell$ denotes a suitable distance between the computed solution $R_\theta(y_i^\delta)$ and the corresponding training data $x_i$.

Different architectures have been investigated for the regularization method $R_\theta$. The most straightforward approach is to try standard architectures for deep neural networks such as U-nets directly approximating a regularized inverse $R_\theta$. Such approaches encounter limitations in many inverse problems in practice due to the complexity of the forward operator (and thus the approximation of the full inversion). Moreover, the generalization properties are doubtful in case of ill-posed problems, since the networks necessarily need to have a large Lipschitz constant, for which the usual machine learning theory would predict an unreasonable generalization error. 

In order to cure the issues in direct end-to-end learning mentioned above, several approaches that include information about the forward operator in the architecture  have been considered. An early and frequently used approach in this direction is to use some approximate inverse $B$ of $A$ and choose the architecture $R_\theta$ of the form
\begin{equation}
    R_\theta(y) = {\cal N}_\theta(B(y)), 
\end{equation}
where ${\cal N}_\theta$ is an appropriate deep neural network. In this way one transfers the inversion problem to a learning problem in the domain of the inverse solutions. 
Then the neural network merely functions as post processing, learning to denoise and correct the rough reconstruction $B(y)$ for artifacts.   
A typical example is (undersampled or limited angle) X-ray tomography, where $B$ can be chosen as a filtered backprojection operator and the subsequent learning arises between images, where a variety of highly developed CNN-type architectures are available (cf. \cite{chen2017low_MB,mccann2017convolutional_MB}).

An alternative approach is guided by well-understood iterative methods for inverse problems such as proximal algorithms, which are then augmented by learnable parameters in order to achieve a reasonable convergence and regularization in a fixed number of iterations (corresponding to layers in deep networks). This approach is typically called {\em unrolling} and comprises several different iterative techniques as well as different types of parametrization, e.g. with fixed or with changing parameters in the different iterations. From a deep learning perspective unrolling can be interpreted as residual networks involving $A$, the adjoint of its derivative and a skip connection to the input layer (the data $y$) (cf. \cite{li2019algorithm_MB}).

\subsection{Unsupervised learning with adversarial data}

The second learning paradigm arises from data sets where both data $(x_i)_{i=1,\ldots,N}$ about the favorable solutions as well as noisy measurement  data $(y_j^\delta)_{j=1,\ldots,M}$ are available, but not in the form of corresponding pairs. Given some approximate inverse $B$ as before, one can generate adversarial data $x_j^\delta = B(y_j^\delta)$, i.e. unfavorable inverse solutions. 

The most prominent example is the learning of adversarial regularizers in a variational setting (cf. \cite{lunz2018adversarial_MB}). The key idea is to learn a parametrized regularization functional $J_\theta$ for a variational regularization method, i.e., 
\begin{equation}
    R_\theta(y) = \text{arg}\min_x \left( D(A(x),y)+\alpha J_\theta(x) \right), 
\end{equation}
with $D$ being an appropriate distance in the measurement domain and $\alpha >0$ a regularization parameter.  Adversarial regularizers are obtained by minimizing the loss 
\begin{equation}
    L(\theta) = \frac{1}N \sum_{i=1}^N J_\theta(x_i)  - \frac{1}M \sum_{j=1}^M J_\theta(x_j^\delta) + \mathcal{S}(\nabla J_\theta), 
\end{equation}
with $\mathcal{S}$ being an appropriate functional penalizing large gradients. Note that the different signs in front of the first two terms favors optimal solutions for $J_\theta$ to take small values on the favorable data $x_i$ and large ones on the adversarial data $X_j^\delta$. The penalization of gradients is necessary to obtain meaningful regularization functionals. By analogy to the dual definition of the $1$-Wasserstein metric between the favorable and adversarial data populations a hard constraint for the gradient has been proposed in \cite{lunz2018adversarial_MB}, i.e.,
\begin{equation}
\mathcal{S}(\nabla J_\theta) = \left\{ \begin{array}{ll} 
0 & \text{if } \Vert \nabla J_\theta(x) \Vert \leq 1 \text{ almost everywhere,} \\ + \infty &\text{else.} \end{array} \right.
\end{equation}
However, it has been shown later that a good approximation of the Wasserstein metric is not desirable anyway (cf. \cite{stanczuk2021wasserstein_MB}), hence other standard penalization functionals can be used, which might increase interpretability of solutions (cf. \cite{mukherjee2021learning_MB,sriram_MB}). Similar adversarial learning techniques can also be coupled with unrolling of iterative algorithms (cf. \cite{mukherjee2021end_MB})

\subsection{Unsupervised generative learning}

The third class of approaches does not use any kind of measurement data for training, but solely data $(x_i)_{i=1,\ldots,N}$  that correspond to typical or desired solutions for the inverse problem, e.g. just typical images in the case of image reconstruction.  
What is to be learned from this type of data is somehow a projection to the manifold of favorable solution or some vector field pointing to it (or in a stochastic interpretation maps or velocities towards higher probability density). Alternatively, these approaches can be understood as generative models, since via projections or velocity fields one is able to generate favorable solutions from arbitrary inputs. 

A canonical and thus frequent choice are denoising models (cf. \cite{Romano2017Little_MB,Reehorst2019Regularization_MB}), which learn a denoising scheme ${\cal D}_\theta(x)$. The simplest version to learn a denoising scheme is supervised learning from artificially generated noisy samples, e.g. by adding random noise to the given samples $x_i$. A more recent approach are denoising diffusion models, which learn a network approximating the score of a diffusion process (related to adding Gaussian noise of increasing strength) instead (cf. \cite{Song2021Score_MB}). These networks can be used as velocity fields towards the manifold of favorable solutions and different -- stochastic or deterministic -- flows back to this manifold can be constructed from it. 

In order to employ denoising diffusion models for inverse problems several methods have been proposed, often under the umbrella of so-called plug-and-play methods (cf. \cite{venkatakrishnan2013pnp_MB,rond2016poisson_MB,kamilov2017plug_MB}). Again, the most straight-forward approach is to use denoising as a postprocessing after a preliminary inversion, i.e.
\begin{equation}
    R_\theta(y) = {\cal D}_\theta(B(y)),
\end{equation}
where ${\cal D}_\theta$ is a parametrized denoiser. A more advanced approach consists of including the denoiser in an iterative scheme. Particularly attractive are schemes resembling proximal splitting in variational regularization (cf. \cite{meinhardt2017learning_MB,wei2022tfpnp_MB}). With a distance measure $D$ as above, the most prominent example is of the form $R_\theta(y) = X_K$, with 
\begin{equation}
    x_{k+1} = {\cal D}_\theta ( x_k - \tau \nabla_x D(A(x),y)), \qquad k=1,\ldots,K-1.
\end{equation}
In approaches providing a velocity field such as diffusion models, several different approaches have been proposed, which combine the velocity with the gradient of the distance $D(A(x),y)$, or use variational inference techniques to locally approximate a related Bayesian posterior (cf. e.g. \cite{Barbano2024Steerable_MB,Feng2023Score_MB,Feng2024Variational_MB,mardani2023variational_MB}). The theoretical justification of these approaches is however widely open.

While diffusion models rely on the formalism of a stochastic differential equation to transform a simple to sample distribution $P_1$, e.g. a normal distribution, into the hard to sample distribution $P_2$ of typical solutions , this can be also be done via the deterministic continuity equation. 
The velocity field corresponding to the continuity equation can be parametrize by a neural network and samples from distributions $P_1$ can be pushed onto samples from $P_2$ by solving an ODE. This approach has been utilized in Flow Matching and Continuous Normalizing Flows (cf. \cite{wald2025flow_MB}), which provide different tractable loss functions to learn such a velocity field.
In the context of Bayesian inverse problems Flow Matching on conditional Wasserstein spaces has been propose to sample from the posterior (cf. \cite{chemseddine2025conditional_MB}).

\section{Learning-based spectral regularizations}

We are interested in inverse problems of the form 
\begin{equation}\label{mb_eq:invproblem}
    y^{\delta} = Ax + \varepsilon,
\end{equation}
where $y^\delta$ is a measurement, that is obtained from an unknown quantity $x$ with a linear, compact forward operator between Hilbert spaces $A: X \rightarrow Y$ and corrupted by unknown noise $\varepsilon$. 
The severeness of the noise is measured by the so-called noise level, denoted by the superscript $\delta$, and is assumed to be controllable.

Since $A$ is compact and linear, it admits a singular value expansion $( u_n, v_n, \sigma_n )_{n\in\N}$, such that 
\begin{equation}\label{mb_eq:svd}
    Ax = \sum_{n \in \N} \sigma_n \langle x,u_n \rangle v_n.
\end{equation}
A possible solution operator for uncorrupted data is the pseudo-inverse $A^\dagger: \mathcal{R}(A)\oplus\mathcal{R}(A)^\perp \rightarrow \mathcal{N}(A)^\perp$,
\begin{equation}\label{mb_eq:pseudoinv}
    A^\dagger y = \sum_{n \in \N} \frac{1}{\sigma_n} \langle y,v_n \rangle u_n.
\end{equation}
While using $A^\dagger$ as a reconstruction operator circumvents the problems of non-existence and non-uniqueness, it might be an instable reconstruction operator. This means, that error the reconstruction error can not be controlled by the measurement error caused by the noise. The aim of regularization is to reconstruct noisy measurements with smoothened versions of the pseudo-inverse (or any other desirable but instable solution operator) while still approaching the pseudo-inverse as the noise vanishes. 

\subsection{Classical regularization theory} \label{ch:_ClassRegMB}
Classical regularization theory considers reconstruction operators $R_\alpha$ parametrized by a regularization parameter $\alpha \in \R_{>0}$. The choice of this parameter depends on the, {  to reconstruct}, measurement $y^\delta$  and/or the noise level $\delta$. In the following, we recapitulate some foundations of the classical theory presented in \cite{engl1996regularization_MB}.
\begin{definition}[{Regularization}]
    Given $\alpha_0 \in \R_{>0}$ we call the family $\lbrace R_\alpha \rbrace_{\alpha \in (0, \alpha_0)}$ a  \emph{regularization} if  $R_\alpha: Y \rightarrow X$ is continuous for any $\alpha \in (0,\alpha_0) $.
\end{definition}
\begin{definition}[{Convergence}]\label{def: Reg_MB}
    A regularization $\lbrace R_{\alpha} \rbrace_{\alpha \in (0, \alpha_0)}$ is called \emph{convergent} if for any $y\in \mathcal{D}(A^\dagger)\subset Y$ there exists a parameter choice rule $\alpha: \R_{>0} \times Y \rightarrow (0, \alpha_0)$ such that 
    \begin{equation}\label{mb_eq:classicallimsup}
    \lim_{\delta \rightarrow 0}\sup \left\{ \| R_{\alpha(\delta,y^\delta)} y^\delta-A^\dagger y\| \,\Bigl | \;y^\delta \in Y , \|y-y^\delta\|\leq \delta \right\}=0. 
    \end{equation}
\end{definition}
We note that for a fixed $y\in \mathcal{D}(A^\dagger)$ Equation \eqref{mb_eq:classicallimsup} implies uniform convergence over all admissible $y^\delta$. However this convergence is not uniform over all $y\in \mathcal{D}(A^\dagger)$, but merely point wise. 
It is shown in \cite[Proposition 3.11]{engl1996regularization_MB} that for forward operators $A$ with an unbounded pseudo-inverse $A^\dagger$ there cannot exist a linear regularization method such that \eqref{mb_eq:classicallimsup} converges uniformly for $y\in \mathcal{D}(A^\dagger)$, $\|y\| \leq 1$. This can be explained with the modulus of continuity of the operator $A$. It is defined for $\delta > 0$ on a set $M \subset X$ as
\begin{equation*}
    \Omega(\delta,M) := \sup\{\|x\|\,|\; x\in M, \|Ax\|\leq \delta \}
\end{equation*}
and is a lower bound for the error in \eqref{mb_eq:classicallimsup} (see \cite[Proposition 3.10]{engl1996regularization_MB}) in the sense that
\begin{equation*}
    \sup \left\{ \| R y^\delta-x\| \,\Bigl | \;x \in M, y^\delta \in Y , \|Ax-y^\delta\|\leq \delta \right\} \geq \Omega(\delta,M).
\end{equation*}
Choosing $M = \mathcal{N}(A)^\perp \cap T^{-1}(B_1(0))$ we {  obtain} that for any $x \in M$, $y^\delta \in Y$ that fulfill $\|Ax-y^\delta\| \leq \delta$, it holds that $y := Ax \in \mathcal{D}(A^\dagger)\cap B_1(0)$ and $A^\dagger y = x$. Therefore 
\begin{align*}
    \sup \left\{ \| R_{\alpha(\delta,y^\delta)} y^\delta-A^\dagger y\| \,\Bigl | \; y \in \mathcal{D}(A^\dagger)\cap B_1(0), y^\delta \in Y , \|y-y^\delta\|\leq \delta \right\}& \\
    \geq \Omega(\delta,  \mathcal{N}(A)^\perp \cap T^{-1}(B_1(0))),
\end{align*}
where the right-hand side is infinity if $A^\dagger$ is unbounded. Since regularization of a bounded pseudo-inverse is not of major interest, convergence rates are usually stated for measurements generated from ground truth data contained in a smaller set $M$. These sets are then called source sets and a popular choice are the sets
\begin{equation*}
    \chi_{\mu,\rho} := \left \{ x \in X \, \Bigl | \;x = (A^*A)^{\mu}\,w, \, \|w\| \leq \rho\right\},
\end{equation*}
for which it holds that (see \cite[Proposition 3.14]{engl1996regularization_MB})\begin{equation*}
    \Omega(\delta, \chi_{\mu,\rho}) \leq \delta^{\frac{2\mu}{2\mu + 1}}\rho^{\frac{1}{2\mu+1}}.
\end{equation*}
It is further shown in \cite[Proposition 3.15]{engl1996regularization_MB} that this bound is sharp. An example for a regularization that obtains convergence rate of optimal order $\delta^{\frac{2}{3}}$ on the source sets $\chi_{1,\rho}$  is Tikhonov regularization (see \cite[Chapter 5.1]{engl1996regularization_MB}). For a regularization parameter $\alpha > 0$, the Tikhonov regularizer can be defined implicitly by a variational problem,
\begin{equation*}
R_{\alpha} y^\delta = \argmin_{x \in X} \frac{1}{2}\|Ax-y^\delta\|^2 + \frac{\alpha}{2} \|x\|^2,
\end{equation*}
or explicitly via the singular value expansion of $A$,
\begin{equation*}
    R_{\alpha} y^\delta = \sum_{n \in \N} \frac{\sigma_n}{\sigma_n^2 + \alpha} \langle y^\delta, v_n\rangle u_n,
\end{equation*}
This is a special case of regularization by spectral filtering of the pseudo-inverse $A^\dagger$, 
\begin{equation*}
    R_{\alpha} y^\delta = \sum_{n \in \N} g_\alpha(\sigma) \langle y^\delta, v_n\rangle u_n,
\end{equation*}
where $g_\alpha: \R_{>0}\rightarrow \R_{\geq0}$ is a filter function. In the following, we investigate different ways to obtain such spectral regularizers in data-driven settings.

\subsection{Data-driven spectral regularization}
In the setting of data-driven reconstruction, we are interested in the statistical version of the inverse problem \eqref{mb_eq:invproblem}. Thus, we assume our unknown ground truth data $x \sim P_x$ to be sampled from a data distribution and noise $\varepsilon \sim D_{\varepsilon}$ that is distributed independent of $x$ and sampled from a noise distribution.

Based on the singular value decomposition of $A^\dagger$ \eqref{mb_eq:pseudoinv}, the works \cite{Kabri2023Convergent_MB, Burger2025Learned_MB} study linear spectral reconstruction operators of the form 
\begin{equation}\label{mb_eq:spectralreg}
    R[g](y^{\delta})=  \sum_{n \in \N} g_n \langle y^{\delta},v_n \rangle u_n,
\end{equation}
with regularization coefficients $g = \left( g_n\right)_{n\in\N}$ that emerge from a data-driven optimization approach. In particular, \cite{Kabri2023Convergent_MB} focuses on the supervised learning approach 
\begin{equation}\label{mb_eq:mseobjective}
    g^{\text{MSE}}_{P_{\varepsilon}, P_x} \in \operatorname*{\arg \min}_{g} \expect_{x \sim P_x, \varepsilon \sim P_{\varepsilon}} \left[\left\| R(Ax+\varepsilon, g) - x\right\|^2\right] 
\end{equation} that was also studied in \cite{Chung2011Designing_MB} and optimizes the mean-squared error (MSE) between the input-output pairs $(Ax+\varepsilon, x)$ following the distributional laws $P_x$ and $P_{\varepsilon}$. Throughout this chapter, we assume that \begin{equation*}
    \expect_{x \sim P_{x}}\left[\|x\|^2\right] < \infty \qquad \text{and} \qquad  \expect_{\varepsilon \sim P_\varepsilon}\left[\varepsilon\right] = 0.
\end{equation*} With some further mild assumptions on the distributions with respect to the forward operator $A$ (see, e.g., \cite{Kabri2023Convergent_MB}) the minimizer of \eqref{mb_eq:mseobjective} is unique and given by 
\begin{equation}\label{mb_eq:gmse}
    g^{\text{MSE}}_{P_{\varepsilon}, P_x} = \left(\frac{\sigma_n \Pi_n}{\Pi_n\sigma_n^2 + \Delta_n}\right)_{n\in\N},
\end{equation}
where 
\begin{equation*}
    \Pi_n = \expect_{x \sim P_x}\left[\left|\langle x, u_n\rangle\right|^2\right] \qquad \text{and} \qquad \Delta_n = \expect_{x \sim P_{\varepsilon}}\left[\left|\langle \varepsilon, v_n\rangle\right|^2\right].
\end{equation*}
Here and in the following, the dependence of $\Pi_n$ and $\Delta_n$ on $P_x$ and $P_\varepsilon$ is omitted in the notation to improve readability.
The possibility to explicitly compute optimal parameters $g$ stems mainly from the orthogonality of the bases $(u_n)_{n\in\N}$ and $(v_n)_{n\in\N}$ and allows for the rigorous study of stability and convergence in different learning paradigms \cite{Burger2025Learned_MB}. To assess these properties, we use the definitions of data-driven regularizations introduced in \cite{Burger2025Learned_MB} that are based on the definitions proposed in \cite{benning2018modern_MB, engl1996regularization_MB}. 
For the sake of simplicity, in this chapter we only consider single-valued reconstruction operators $R$ and consider the pseudo inverse \eqref{mb_eq:pseudoinv} to be desired solution operator for uncorrupted data. We further restrict ourselves to the metric induced by the squared Hilbert-space norm that already appears in the formulation of the supervised approach \eqref{mb_eq:mseobjective}. 

Importantly, the understanding of parameters differs between the definitions of classic and data-driven regularizations. The regularization coefficients $g$ that appear in the definition of the spectral reconstruction operator R \eqref{mb_eq:spectralreg} are called regularization parameters in the classic formulation of regularization. In a data-driven setting these parameters are obtained automatically by an optimization process. Moreover, a data-driven approach relies on training examples drawn from training distributions. For example in \eqref{mb_eq:mseobjective}, the optimal parameters $g^{\text{MSE}}$ are automatically obtained by minimizing the MSE with respect to the training distributions $P_x$ and $P_\varepsilon$. Taking this into account, the definition of data-driven regularization treats these training distributions as what is called parameters in the classic setting.  Moreover, unless stated otherwise, we keep the data distribution $P_x$ fixed and denote by 
\begin{equation*}
R^{\text{MSE}}_{P_\varepsilon} := R\left[g_{P_\varepsilon, P_x}^{\text{MSE}}\right]
\end{equation*}
the data-driven reconstruction operator obtained by optimizing \eqref{mb_eq:mseobjective} for training distributions $P_x$ and $P_\varepsilon$. 
\begin{definition}[{Data-driven regularization}]
Given a family of noise distributions $\Psi$ we call the family $\lbrace R_{P_\varepsilon} \rbrace_{P_\varepsilon \in \Psi}$ a \emph{data-driven regularization} if  $R_{P_\varepsilon}: Y \rightarrow X$ is continuous for any choice of $P_\varepsilon \in \Psi$.
\end{definition}
The definition of convergence further requires a notion of the noise level $\delta \in \R^+$ of a noise distribution $P_\varepsilon$. In many cases, it can be defined via the function
\begin{equation*}
    \boldsymbol{\delta}(P_\varepsilon) = \sqrt{\expect_{\varepsilon \sim P_\varepsilon}\left[\left\|\varepsilon\right\|^2\right]}.
\end{equation*}
In \cite{Kabri2023Convergent_MB, Burger2025Learned_MB}, the noise level is defined via
\begin{equation}\label{mb_eq:noiselevelDelta}
    \boldsymbol{\delta}(P_\varepsilon) = \sqrt{\sup_{n \in \N}\expect_{\varepsilon \sim P_\varepsilon}\left[\left|\langle\varepsilon, v_n\rangle\right|^2\right]},
\end{equation}
to allow for white noise. Unless stated otherwise, we use the latter defintion.
\begin{definition}[{ Convergence}]\label{def: ConDaR_MB}
    A data-driven regularization $\lbrace R_{P_\varepsilon} \rbrace_{ P_\varepsilon \in \Psi}$ is called \emph{convergent} if for any $y \in \mathcal{D}(A^\dagger)\subset Y$ there exists a parameter choice rule $(\delta, P_\varepsilon) \mapsto \Tilde{P}_\varepsilon$ such that 
    \begin{equation*}
        \lim_{\delta \rightarrow 0}\;\sup\left\lbrace \quad\expect_{\varepsilon \sim P_\varepsilon}\left[ \left\| A^\dagger y - R_{\Tilde{P}_\varepsilon}(y+\varepsilon)\right\|^2 \right]\quad \Bigl | \quad P_\varepsilon \in \Psi,\,\boldsymbol{\delta}(P_\varepsilon) \leq \delta  \quad\right\rbrace = 0.
    \end{equation*}
    We call the regularization convergent over the fixed data distribution $P_x$ if there exists a parameter choice rule $(\delta, P_\varepsilon) \mapsto \Tilde{P}_\varepsilon$ such that
    \begin{equation*}
        \lim_{\delta \rightarrow 0}\;\sup\left\lbrace \;\expect_{x \sim P_x, \varepsilon \sim P_\varepsilon}\left[ \left\| A^\dagger Ax - R_{\Tilde{P}_\varepsilon}(Ax+\varepsilon)\right\|^2 \right]\;\Bigl | \; P_\varepsilon \in \Psi,\,\boldsymbol{\delta}(P_\varepsilon) \leq \delta   \;\right\rbrace = 0.
    \end{equation*}
\end{definition}

\subsection{Self-supervised regularization with spectral plug-and-play priors}
In \cite[Section 4.2]{Burger2025Learned_MB} it is pointed out that the reconstruction operator $R^{\text{MSE}}_{P_\varepsilon}$ can also be obtained with a self-supervised learning approach. More precisely, it is considered to optimize a linear denoiser of the form
\begin{equation}\label{mb_eq:denoiser}
    D[\lambda](x) = \sum_{n = 1}^\infty \frac{1}{1+\lambda_n}\langle x, u_n \rangle u_n
\end{equation}
with respect to the self-supervised denoising objective
\begin{equation}\label{mb_eq:denoisingobjective}
    \min_\lambda \expect_{x\sim P_x,\,\tilde\varepsilon \sim P_{\tilde{\varepsilon}}}\left[\left\|D[\lambda](x+\tilde{\varepsilon}) - x\right\|^2\right].
\end{equation}
As before, $x\sim P_x$ is drawn from a data distribution and $\tilde{\varepsilon} \sim P_{\tilde{\varepsilon}}$ is drawn from a noise distribution, but in contrast to \eqref{mb_eq:mseobjective}, the noise is directly added to the data, with no measurement operator in between. 
For a data distribution that fulfills $\Pi_n > n$ for all $n \in \N$ the optimal denoiser \eqref{mb_eq:denoiser} with respect to \eqref{mb_eq:denoisingobjective} is given by 
\begin{equation*}
    D^*_{P_{\tilde{\varepsilon}}}:= D[\lambda^*_{P_{\tilde{\varepsilon}}, P_x}],\qquad \text{with}\; \lambda^*_{P_{\tilde{\varepsilon}}, P_x} = \left(\frac{\expect_{\tilde{\varepsilon}\sim P_{\tilde{\varepsilon}}}\left[\langle \tilde{\varepsilon}, u_n\rangle^2\right]}{\Pi_n}\right)_{n \in \N}.
\end{equation*}
Furthermore, if $\lambda_n \geq 0$ for all $n$, the denoiser of the form \eqref{mb_eq:denoiser} can be written as the proximal operator
\begin{equation*}
    \operatorname{prox}_{J}(x):= \operatorname{arg\,min}_{z \in X} \frac{1}{2} \|x-z\|^2 + J[\lambda](z)
\end{equation*}of the quadratic regularization functional
\begin{equation*}
    J[\lambda](x) = \frac{1}{2} \sum_{n \in \N} \lambda_n \langle x, u_n \rangle^2.
\end{equation*}
Therefore, it can be used as a so-called plug-and-play prior \cite{venkatakrishnan2013pnp_MB} in order to minimize the variational problem
\begin{equation}\label{mb_eq:varprob}
    \frac{1}{2}\|Ax-y^\delta\|^2 + J[\lambda](x)
\end{equation}
that corresponds to the inverse problem \eqref{mb_eq:invproblem}. Since $J[\lambda]$, $D[\lambda]$ and $A$ are all diagonal with respect to the same basis, it turns out that the minimizer of \eqref{mb_eq:varprob} is given by \begin{equation*}
    R\left[\left(\frac{\sigma_n}{\sigma_n^2 + \lambda_n}\right)_{n\in \N}\right](y^\delta),
\end{equation*}
which is equivalent to $R^{\text{MSE}}_{P_\varepsilon}$ if $\lambda_n = \Delta_n/\Pi_n$, which holds true for $D^*_{P_{\tilde{\varepsilon}}}$ if 
\begin{equation}\label{mb_eq:noisenoise}
    \expect_{\tilde{\varepsilon}\sim P_{\tilde{\varepsilon}}}\left[\langle \tilde{\varepsilon}, u_n\rangle^2\right]
    = \Delta_n \qquad \text{for all }n\in\N.
\end{equation}

In other words, if the noise used to train the denoiser behaves in the same way as the noise that appears in the inverse problem, $R^{\text{MSE}}_{P_\varepsilon}y^\delta$ is the minimizer of \eqref{mb_eq:varprob} with $\lambda = \lambda^*_{P_{\tilde{\varepsilon}}, P_x}$. In practice however, the variational problem \eqref{mb_eq:varprob} is not solved directly, but iteratively. In \cite{venkatakrishnan2013pnp_MB} for example, it is proposed to integrate the denoiser into ADMM \cite{glowinski1975approximation_MB, gabay1976admm_MB, Boyd2010admm_MB}.  Hence, the proximal operator required to solve \eqref{mb_eq:varprob} needs to correspond to an appropriate step-size $\tau$ for the chosen iterative method. In general, this means to find $\lambda_\tau$ such that 
\begin{equation}\label{mb_eq:proxscaling}
    D[\lambda_\tau] = \operatorname{prox}_{\tau J[\lambda^*_{P_{\tilde{\varepsilon}}, P_x}]}.
\end{equation}
In our setting, it is straight-forward to compute $\lambda_{\tau} = \tau \lambda^*_{P_{\tilde{\varepsilon}}, P_x} = \tau  \Delta_n / \Pi_n$, which corresponds to scaling the noise distribution by $\tau$. In particular, if $P_{\tilde{\varepsilon}}$ satisfies \eqref{mb_eq:noisenoise} then the denoiser $D^*_{\tau P_{\tilde{\varepsilon}}}$ is the \enquote{correct} choice for an iterative scheme with step-size $\tau$. 
We further note that for general linear denoisers, it was shown in \cite{Hauptmann2024convergent_MB} that the scaling by $\tau$ performed in \eqref{mb_eq:proxscaling} is equivalent to spectral filtering of the denoiser by the filter function $h_{\tau}: \R \rightarrow \R$,
\begin{equation*}
    h_\tau(s) = \frac{s}{\tau - s(\tau-1)}.
\end{equation*}
In our case, this would mean to filter the eigenvalues of $D[\lambda]$ that are given by $1/(1+\lambda_n)$. Inserting these into the filter function, we obtain
\begin{equation*}
h_{\tau}\left(\frac{1}{1+\lambda_n}\right) = \frac{1}{1+\tau\lambda_n},
\end{equation*}
which is exactly the scaling we would expect from the derivations above.
\section{Convergence rates of data-driven spectral regularizations}\label{mb_sec:convrates}
In \cite{Kabri2023Convergent_MB, Burger2025Learned_MB} we showed that $R^{\text{MSE}}_{ P_\varepsilon}$ is a convergent data-driven regularization over fixed training data distributions for mild assumptions on the family $\Psi$. We summarize this result in the following theorem.
\begin{theorem}\label{mb_thm:convmse}
    For any $P_\varepsilon \in \Psi$, let there exist a constant $c > 0$ such that \begin{equation}\label{mb_eq:cont} \Delta_n \geq c\,\sigma_n \Pi_n
    \end{equation} for $n$ large enough and $\Pi_n > 0$ for all $n \in \N$. Then, the family $\left \lbrace R^{\text{MSE}}_{ P_\varepsilon}\right\rbrace_{P_\varepsilon \in \Psi}$ is a convergent data-driven regularization on $\mathcal{D}(A^\dagger)\cap Y$ and over the fixed distribution $P_x$. In particular it holds for any $y \in \mathcal{D}(A^\dagger)\cap Y$ that
    \begin{equation}\label{mb_eq:limsupfehlery}
        \lim_{\delta \rightarrow 0}\,\sup\left\lbrace \,\expect_{\varepsilon \sim P_\varepsilon}\left[ \left\| A^\dagger y - R^{\text{MSE}}_{ P_\varepsilon}(y+\varepsilon)\right\|^2 \right]\,\Bigl | \, P_\varepsilon \in \Psi,\,\boldsymbol{\delta}(P_\varepsilon) \leq \delta \,\right\rbrace = 0,
        \end{equation}
    and that
     \begin{equation}\label{mb_eq:limsupfehler}
        \lim_{\delta \rightarrow 0}\,\sup\left\lbrace \,\expect_{x \sim P_x, \varepsilon \sim P_\varepsilon}\left[ \left\| A^\dagger Ax - R^{\text{MSE}}_{ P_\varepsilon}(Ax+\varepsilon)\right\|^2 \right]\,\Bigl | \, P_\varepsilon \in \Psi,\,\boldsymbol{\delta}(P_\varepsilon) \leq \delta \,\right\rbrace = 0.
        \end{equation}
    \end{theorem}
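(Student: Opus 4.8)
The plan is to reduce the whole statement to a componentwise computation in the singular system $(u_n,v_n,\sigma_n)_{n\in\N}$, in which $A$, $A^\dagger$ and $R^{\text{MSE}}_{P_\varepsilon}$ are simultaneously diagonal. First I would fix $y\in\mathcal{D}(A^\dagger)$, set $x^\dagger:=A^\dagger y$ with coordinates $x^\dagger_n:=\langle x^\dagger,u_n\rangle$ (so that $\langle y,v_n\rangle=\sigma_n x^\dagger_n$ and $\sum_n|x^\dagger_n|^2=\|x^\dagger\|^2<\infty$), and expand the squared error modewise via Parseval. Tonelli's theorem lets me interchange $\expect_\varepsilon$ with the sum since all terms are nonnegative, and the centering assumption $\expect_\varepsilon[\langle\varepsilon,v_n\rangle]=0$ annihilates the cross terms, producing the bias--variance split. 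Writing $\beta_n:=\Pi_n\sigma_n^2+\Delta_n$ and inserting $g_n^{\text{MSE}}=\sigma_n\Pi_n/\beta_n$ from \eqref{mb_eq:gmse} (so that $1-\sigma_n g_n^{\text{MSE}}=\Delta_n/\beta_n$), I obtain
\[
\expect_\varepsilon\left[\left\|A^\dagger y-R^{\text{MSE}}_{P_\varepsilon}(y+\varepsilon)\right\|^2\right]=\sum_{n\in\N}\left(|x^\dagger_n|^2\,\frac{\Delta_n^2}{\beta_n^2}+\frac{\sigma_n^2\Pi_n^2\Delta_n}{\beta_n^2}\right).
\]
The analogous computation under $x\sim P_x$ has bias weight $\Pi_n$ as well, and there bias plus variance simplifies to the single term $\Pi_n\Delta_n/\beta_n$, which is what I would use for \eqref{mb_eq:limsupfehler}.

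Before the limits, I would check that the family is a data-driven regularization at all, i.e. that each $R^{\text{MSE}}_{P_\varepsilon}$ is continuous. Since the operator is diagonal, this is equivalent to boundedness of $(g_n^{\text{MSE}})_n$, and this is exactly where hypothesis \eqref{mb_eq:cont} enters: $\Delta_n\geq c\,\sigma_n\Pi_n$ for large $n$ gives $g_n^{\text{MSE}}=\sigma_n\Pi_n/\beta_n\leq\sigma_n\Pi_n/\Delta_n\leq 1/c$, while the finitely many remaining coefficients are finite because $\Pi_n>0$ forces $\beta_n>0$; hence $\sup_n g_n^{\text{MSE}}<\infty$.

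For the convergence itself I would run a dominated-convergence (three-$\varepsilon$) argument with the identity parameter choice $\tilde P_\varepsilon=P_\varepsilon$. Each summand is dominated, uniformly in $P_\varepsilon$, by a fixed summable sequence: the bias term by $|x^\dagger_n|^2$ and the variance term by $\Pi_n/4$ (using $\beta_n^2\geq 4\Pi_n\sigma_n^2\Delta_n$ from AM--GM), where $\sum_n\Pi_n=\expect_{x}[\|x\|^2]<\infty$ and $\sum_n|x^\dagger_n|^2<\infty$. Given $\eta>0$ I therefore fix $N$, independently of $P_\varepsilon$, so that both tails $\sum_{n>N}(\cdots)$ fall below $\eta/2$. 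For the remaining finite head I use $\beta_n\geq\Pi_n\sigma_n^2$ together with $\Delta_n\leq\boldsymbol{\delta}(P_\varepsilon)^2\leq\delta^2$ from \eqref{mb_eq:noiselevelDelta} to bound the $n$-th bias and variance contributions by $\delta^4|x^\dagger_n|^2/(\Pi_n\sigma_n^2)^2$ and $\delta^2/\sigma_n^2$ respectively; for fixed $N$ these finite sums vanish as $\delta\to0$, uniformly over all admissible $P_\varepsilon$. The identical scheme, now with head bound $\Delta_n/\sigma_n^2$ and domination by $\Pi_n$, proves \eqref{mb_eq:limsupfehler}.

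The one genuinely delicate point is the uniformity of the limit over the whole noise family rather than for a single distribution: plain pointwise-in-$n$ decay $\Delta_n\to0$ is insufficient, since the set of modes on which $\Delta_n$ is not yet small can grow as $\delta$ shrinks. What rescues the argument is that the dominating sequences $|x^\dagger_n|^2$ and $\Pi_n$ do not depend on the noise, so the cutoff $N$ can be chosen before sending $\delta\to0$; the summability $\sum_n\Pi_n<\infty$ (the mean-square bound on $P_x$) is thus the structural assumption carrying the proof, whereas \eqref{mb_eq:cont} is needed only for continuity. I would take particular care on the regime where $\Delta_n$ and $\Pi_n\sigma_n^2$ are comparable, since that is exactly where the uniform $\tfrac14$ and the domination estimates are tight.
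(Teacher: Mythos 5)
Your proposal is correct and follows essentially the same route as the paper: the paper's proof merely cites \cite{Burger2025Learned_MB} for continuity (via the bound $g_n^{\text{MSE}}\leq 1/c$ from \eqref{mb_eq:cont}) and \cite{Kabri2023Convergent_MB} for convergence, but the underlying argument there is exactly your bias--variance decomposition \eqref{mb_eq:errordecomp} combined with the truncation estimate \eqref{mb_eq:splitsum}, bounding the head by $\sum_{n\leq N}\Delta_n/\sigma_n^2\leq\delta^2\sum_{n\leq N}\sigma_n^{-2}$ and the tail by the noise-independent summable majorants $\Pi_n$ and $|x^\dagger_n|^2$. Your write-up simply makes the cited arguments self-contained (including the AM--GM domination for the fixed-$y$ case), and correctly identifies that $\sum_n\Pi_n<\infty$ carries the uniformity while \eqref{mb_eq:cont} is only needed for continuity.
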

    \begin{proof}
        Making use of \eqref{mb_eq:cont}, the continuity of $R^{\text{MSE}}_{ P_\varepsilon}$ for any $P_\varepsilon \in \Psi$ follows from  \cite[Lemma 1]{Burger2025Learned_MB}. It further follows from \cite[Theorem 3]{Kabri2023Convergent_MB} and the corresponding proof that 
        \eqref{mb_eq:limsupfehlery} and \eqref{mb_eq:limsupfehler} hold. This corresponds to the parameter-choice rule $(\delta, P_\varepsilon) \mapsto P_\varepsilon$ and is thus sufficient to show the convergence on $\mathcal{D}\cap Y$ and over the distribution $P_x$.
    \end{proof}

    We now want to study the rate of convergence in \eqref{mb_eq:limsupfehler}. Writing out the MSE in \eqref{mb_eq:limsupfehler} {  as} it was done in \cite{Kabri2023Convergent_MB}, we derive that
    \begin{align}\label{eq: Loss_MB}
    \begin{split}
        \| R[g](Ax+\varepsilon)+A^\dagger A x\|^2
        =&\sum_{n}\left((1-\sigma_n g_n)\langle x, u_n\rangle-g_n \langle \varepsilon,v_n\rangle\rangle\right)^2\\
        =&\sum_n  (1-\sigma_n g_n)^2\langle x, u_n\rangle^2 +g_n^2\langle \varepsilon,v_n\rangle^2 \\&-2(1-\sigma_n g_n)g_n\langle x, u_n\rangle\langle \varepsilon,v_n\rangle.   
    \end{split}
    \end{align}
    Inserting $g=g^{\text{MSE}}_{P_{\varepsilon}, P_x}$ and taking the expected value over $P_x$ and $P_\varepsilon$ we obtain
    \begin{align}\label{mb_eq:errordecomp}
        &\expect_{x \sim P_x, \varepsilon \sim P_\varepsilon}\left[ \left\| A^\dagger Ax - R^{\text{MSE}}_{ P_\varepsilon}(Ax+\varepsilon)\right\|^2 \right] \notag\\
        =&\expect_{x\sim P_x} \left[ \left\| A^\dagger Ax - R^{\text{MSE}}_{ P_\varepsilon}(Ax)\right\|^2 \right] + \expect_{\varepsilon \sim P_{\varepsilon}}\left[ \left\| R^{\text{MSE}}_{ P_\varepsilon}(\varepsilon)\right\|^2 \right]\\=& \sum_{n \in \N} \frac{\Pi_n \, \Delta_n}{\Pi_n \sigma_n^2 + \Delta_n},\notag
    \end{align}
    where from the first to the second line, the mixed terms vanish since $\varepsilon$ has zero mean and is not correlated to $x$.
     Given that all coefficients that appear on the right-hand side are non-negative, we directly get the two estimates
    \begin{align}
        \sum_{n \in \N} \frac{\Pi_n \, \Delta_n}{\Pi_n \sigma_n^2 + \Delta_n} \leq \sum_{n \in \N} \Pi_n \label{mb_eq:pibound}
    \end{align}
     and
    \begin{align}
         \sum_{n \in \N} \frac{\Pi_n \, \Delta_n}{\Pi_n \sigma_n^2 + \Delta_n} \leq \sum_{n \in \N} \frac{\Delta_n}{\sigma_n^2}\label{mb_eq:deltabound}.
    \end{align}
    While \eqref{mb_eq:pibound} is finite, it does not depend on $\delta$ and therefore does not vanish except the data distribution is already concentrated on zero. On the other hand although each summand of \eqref{mb_eq:deltabound} can be bounded by $\delta^2/\sigma_n^2$, the case that the whole series is finite corresponds to very well-posed inverse problems and is thus not of major interest. Thus, the convergence proof in \cite{Kabri2023Convergent_MB} instead considers the mixed estimate
    \begin{equation}\label{mb_eq:splitsum}
        \sum_{n \in \N} \frac{\Pi_n \, \Delta_n}{\Pi_n \sigma_n^2 + \Delta_n} \leq 
        \sum_{n = 1}^N \frac{\Delta_n}{\sigma_n^2} + \sum_{n = N+1}^\infty \Pi_n, 
    \end{equation}
    where $N \in \N$ can be chosen freely. To find $N$ that minimizes the upper bound, we first make assumptions on the decay behavior of $\Delta_n/\sigma_n^2$ and $\Pi_n$.
    \begin{assumption}\label{mb_assmp:decayrates}
        We assume that $\Pi_n = O(n^{-a})$ with $a > 1$ and $\Delta_n/\sigma_n^2 = O(\delta^2 n^{b})$ with $b \in \R$.
    \end{assumption}
    Generalizing the above assumption to an entire set of noise distributions $\Psi$, we can show the following convergence rate.
    \begin{theorem}\label{mb_thm:convratedecayrates}
        Let Assumption \ref{mb_assmp:decayrates} hold for constants $a > 1$, $b\in \R$ for the fixed data distribution $P_x$ and uniformly for all noise distributions $P_\varepsilon \in \Psi$, i.e., there exists a constant $C$, such that 
        \begin{align*}\frac{\Delta_n(P_\varepsilon)}{\sigma_n^2} \leq C \boldsymbol{\delta}(P_\varepsilon)^2n^b
        \end{align*}
        for all $P_\varepsilon \in \Psi$. Then it holds that 
        \begin{equation*}
        \sup\left\lbrace \,\expect_{x \sim P_x, \varepsilon \sim P_\varepsilon}\left[ \left\| A^\dagger Ax - R^{\text{MSE}}_{ P_\varepsilon}(Ax+\varepsilon)\right\|^2 \right]\,\Bigl | \, P_\varepsilon \in \Psi,\,\boldsymbol{\delta}(P_\varepsilon) \leq \delta \,\right\rbrace \lesssim \delta^{2\frac{a-1}{a+b}}.
        \end{equation*}
    \end{theorem}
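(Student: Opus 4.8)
The plan is to reduce everything to optimizing the already-available deterministic bound \eqref{mb_eq:splitsum} over the free cutoff $N$, exploiting that both decay estimates are uniform over $\Psi$. Concretely, by the error decomposition \eqref{mb_eq:errordecomp} together with \eqref{mb_eq:splitsum}, for every $P_\varepsilon \in \Psi$ with $\boldsymbol{\delta}(P_\varepsilon) \leq \delta$ and every $N \in \N$ we have
\[
\expect_{x \sim P_x, \varepsilon \sim P_\varepsilon}\left[\left\|A^\dagger Ax - R^{\text{MSE}}_{P_\varepsilon}(Ax+\varepsilon)\right\|^2\right] \leq \sum_{n=1}^N \frac{\Delta_n}{\sigma_n^2} + \sum_{n=N+1}^\infty \Pi_n.
\]
By hypothesis $\Delta_n/\sigma_n^2 \leq C\boldsymbol{\delta}(P_\varepsilon)^2 n^b \leq C\delta^2 n^b$ uniformly in $P_\varepsilon$, while $\Pi_n \leq C' n^{-a}$ holds for the fixed $P_x$; hence the right-hand side is bounded by a quantity depending only on $\delta$ and $N$, so the supremum over admissible $P_\varepsilon$ is already controlled by this single deterministic expression, and it remains to choose $N = N(\delta)$.

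Next I would estimate the two sums by integral comparison. For the tail, $a > 1$ gives $\sum_{n=N+1}^\infty n^{-a} \leq \int_N^\infty t^{-a}\,dt = \frac{N^{1-a}}{a-1}$. For the head, in the relevant regime $b > -1$ one has $\sum_{n=1}^N n^b \lesssim N^{b+1}$, so the bound becomes $\lesssim \delta^2 N^{b+1} + N^{1-a}$. This is a bias--variance trade-off: the first (noise) term grows in $N$ and the second (approximation) term decays in $N$. Balancing them, $\delta^2 N^{b+1} \asymp N^{1-a}$, yields the optimal cutoff $N \asymp \delta^{-2/(a+b)}$ (note $a+b>0$ since $a>1$, $b>-1$). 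Substituting this choice, both terms become of order $\delta^{2(a-1)/(a+b)}$: indeed $N^{1-a} \asymp \delta^{-2(1-a)/(a+b)} = \delta^{2(a-1)/(a+b)}$, and likewise $\delta^2 N^{b+1} \asymp \delta^{2(a+b-(b+1))/(a+b)} = \delta^{2(a-1)/(a+b)}$. Taking $N = \lceil \delta^{-2/(a+b)} \rceil$ to make the cutoff integer only affects constants, and since the bound is uniform over $\Psi$ it transfers directly to the supremum, giving the claimed rate.

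The genuinely elementary part is the integral comparison; the only real decision is identifying the correct scaling $N \asymp \delta^{-2/(a+b)}$, after which the two contributions automatically equilibrate at the stated exponent. The step that requires care is the head estimate $\sum_{n=1}^N n^b \lesssim N^{b+1}$, which is exactly where the sign of $b+1$ enters: for $b > -1$ it produces the clean rate above, whereas $b = -1$ introduces a logarithmic factor ($\sum_{n=1}^N n^{-1} \sim \log N$) and $b < -1$ makes the head sum convergent, so that the trade-off degenerates and the rate saturates at $\delta^2$. I would therefore carry out the argument for $b > -1$ (which covers the cases of practical interest, e.g. $\Delta_n$ bounded below with ill-posed $\sigma_n \to 0$) and treat the boundary behaviour as a separate remark. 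A secondary check is that $N(\delta)$ may be chosen independently of $P_\varepsilon$, which is precisely guaranteed by the uniformity constant $C$ in the hypothesis; this is what upgrades a pointwise-in-$P_\varepsilon$ rate to the uniform supremum appearing in the statement.
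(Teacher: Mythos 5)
Your proof is correct and follows essentially the same route as the paper's (Appendix A): bound the error by the split sum \eqref{mb_eq:splitsum}, estimate the head and tail by integral comparison as in Lemma \ref{mb_lem:hilfslemmadecay}, and balance the two contributions at $N \asymp \delta^{-2/(a+b)}$, with the uniformity constant $C$ entering exactly as you describe to pass to the supremum over $\Psi$. Your restriction to $b>-1$ and your remarks on $b=-1$ (logarithmic factor) and $b<-1$ (saturation at $\delta^2$) likewise mirror the case split in the paper's Lemma \ref{mb_lem:convratedoubleexp}.
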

    \begin{proof}
        The proof can be found in Appendix A.
    \end{proof}
     The regularity of $\Pi_n$ and $\Delta_n/\sigma_n$ can be interpreted as the probabilistic version of source conditions known from classical regularization theory. The connection becomes clear with the next Theorem, in which we consider a more general long-time behavior, analogous to the conditions fulfilled in the source sets $\chi_{\mu,\rho}$.
\begin{assumption}\label{mb_assmp:sourcecondition}
        We assume that $\Pi_n = \sigma_n^{4\mu}\,\beta_n$ and $\Delta_n = \delta^2\,\gamma_n$ with $\beta_n, \gamma_n \geq 0$ for all $n \in \N$ such that
        
        \begin{equation}\label{mb_eq:noisedatasum}
            \sum_{n \in \N}\beta_n^{\frac{1}{1+2\mu}}\gamma_n^{\frac{2\mu}{1+2\mu}} \leq c,
        \end{equation}
        with $c < \infty$.
    \end{assumption}
    Assumption \eqref{mb_assmp:sourcecondition} relates to the source sets $\chi_{\mu,\rho}$ in the following way: Again, we assume that $x = (A^*A)^{\mu}w$, but this time, $w$ is a random variable with
    \begin{equation*}
        \expect_{w} \left[\langle w, u_n\rangle^2\right] = \beta_n.
    \end{equation*}
    Then it follows that $\Pi_n  = \sigma_n^4\beta_n$. The condition \eqref{mb_eq:noisedatasum} is in particular a stronger condition than $\expect_{w} \left[\| w\|^2\right] = \sum_{n \in \N} \beta_n \leq \rho^2$. The latter would be a more direct generalization of the classical requirement that $\|w\| \leq \rho$, but since we do not pose any assumptions on the decay of $\gamma_n$, it is not applicable in our setting. We further note that since we consider quadratic errors, the decay rates we state in the following should be compared to the square of the classical bounds, i.e., to $\delta^{2\frac{2\mu}{1+2\mu}}$. 

    \begin{theorem}\label{mb_thm:convratesourcecondition}
        Let Assumption \ref{mb_assmp:sourcecondition} hold for constants $\mu \geq 0$, $c < \infty$ for the fixed data distribution $P_x$ and all noise distributions $P_\varepsilon \in \Psi$. Then it holds that 
        \begin{equation*}
        \sup\left\lbrace \,\expect_{x \sim P_x, \varepsilon \sim P_\varepsilon}\left[ \left\| A^\dagger Ax - R^{\text{MSE}}_{ P_\varepsilon}(Ax+\varepsilon)\right\|^2 \right]\,\Bigl | \, P_\varepsilon \in \Psi,\,\boldsymbol{\delta}(P_\varepsilon) \leq \delta \,\right\rbrace \lesssim \delta^{2\frac{2\mu}{1+2\mu}}.
        \end{equation*}
    \end{theorem}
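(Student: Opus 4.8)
The plan is to begin from the exact error identity \eqref{mb_eq:errordecomp}, which collapses the quantity to be estimated into the single series $\sum_{n}\frac{\Pi_n\Delta_n}{\Pi_n\sigma_n^2+\Delta_n}$, and then to bound each summand by one interpolation inequality rather than by the free cut-off $N$ used in the proof of Theorem \ref{mb_thm:convratedecayrates}. The elementary fact I would use is that for nonnegative $a,b$ and any $p\in[0,1]$ the weighted arithmetic--geometric mean inequality gives $a+b\ge(1-p)a+pb\ge a^{1-p}b^{p}$, and hence
\[
\frac{ab}{a+b}\le a^{p}b^{1-p}.
\]

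I would apply this with $a=\Pi_n\sigma_n^2$ and $b=\Delta_n$, writing the summand as $\sigma_n^{-2}\,\frac{ab}{a+b}\le\sigma_n^{-2}(\Pi_n\sigma_n^2)^{p}\Delta_n^{1-p}$, and then choose $p$ to annihilate the singular values. Substituting the source-condition parametrization $\Pi_n=\sigma_n^{4\mu}\beta_n$ and $\Delta_n=\delta^2\gamma_n$ from Assumption \ref{mb_assmp:sourcecondition}, the power of $\sigma_n$ in the resulting bound is $(4\mu+2)p-2$, which vanishes exactly for $p=\frac{1}{1+2\mu}$, so that $1-p=\frac{2\mu}{1+2\mu}$. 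With this choice every $\sigma_n$ disappears and each term is controlled by $\delta^{2\frac{2\mu}{1+2\mu}}\beta_n^{\frac{1}{1+2\mu}}\gamma_n^{\frac{2\mu}{1+2\mu}}$; summing over $n$ and invoking the summability hypothesis \eqref{mb_eq:noisedatasum} then yields the claimed bound $\sum_n\frac{\Pi_n\Delta_n}{\Pi_n\sigma_n^2+\Delta_n}\le c\,\delta^{2\frac{2\mu}{1+2\mu}}$.

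It remains to pass to the supremum over $\Psi$. Since Assumption \ref{mb_assmp:sourcecondition} and in particular the bound \eqref{mb_eq:noisedatasum} hold uniformly in $P_\varepsilon$ with one constant $c$, and since reading $\Delta_n=\boldsymbol\delta(P_\varepsilon)^2\gamma_n$ makes the per-distribution estimate scale like $\boldsymbol\delta(P_\varepsilon)^{2\frac{2\mu}{1+2\mu}}$, I would finish by using the monotonicity of $t\mapsto t^{2\frac{2\mu}{1+2\mu}}$ (legitimate since $\mu\ge0$) together with $\boldsymbol\delta(P_\varepsilon)\le\delta$ to replace $\boldsymbol\delta(P_\varepsilon)$ by $\delta$ before taking the supremum.

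The computation is short, and I expect the only real obstacle to be the exponent bookkeeping: one has to notice that the harmonic-mean-type summand should be handled by a single interpolation inequality tuned precisely so that the same value $p=\frac{1}{1+2\mu}$ both cancels the uncontrolled singular-value factors and reproduces the exponents $\frac{1}{1+2\mu}$ on $\beta_n$ and $\frac{2\mu}{1+2\mu}$ on $\gamma_n$ appearing in \eqref{mb_eq:noisedatasum}. Once that alignment is verified, the summability hypothesis is used with no slack and the remainder is routine.
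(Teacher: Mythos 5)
Your proposal is correct and follows essentially the same route as the paper: Appendix~B bounds each summand by splitting the denominator as $(\Pi_n\sigma_n^2+\Delta_n)^{\frac{2\mu}{1+2\mu}}(\Pi_n\sigma_n^2+\Delta_n)^{\frac{1}{1+2\mu}}\geq(\Pi_n\sigma_n^2)^{\frac{2\mu}{1+2\mu}}\Delta_n^{\frac{1}{1+2\mu}}$, which is exactly your interpolation bound $\frac{ab}{a+b}\leq a^{p}b^{1-p}$ with $p=\frac{1}{1+2\mu}$, after which the singular values cancel and \eqref{mb_eq:noisedatasum} finishes the sum. The only difference is the elementary justification of the denominator estimate (weighted AM--GM versus $a+b\geq\max(a,b)$), which is immaterial.
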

    \begin{proof}
        The proof can be found in Appendix B.
    \end{proof}
        \begin{remark}
        If Assumption \ref{mb_assmp:decayrates} is fulfilled with $b > -1$, Assumption \ref{mb_assmp:sourcecondition} follows for any $\mu < \frac{a-1}{2(b+1)}$, and the decay rate obtained by Theorem \ref{mb_thm:convratesourcecondition} with $\mu \rightarrow \frac{a-1}{2(b + 1)}$ coincides with the rate obtained by Theorem \ref{mb_thm:convratedecayrates}. The same arguments close the gap for $b = -1$, since in this case we can send $\mu \rightarrow \infty$ and obtain a  rate of order $\delta^2$.
    \end{remark}
    \begin{remark}
        For deterministic data $x \in X$, we can bound the error by
\begin{eqnarray*}
        \expect_{\varepsilon \sim P_\varepsilon}\left[ \left\| A^\dagger Ax - R_{P_\varepsilon}(Ax+\varepsilon)\right\|^2 \right] &=& \sum_{n \in \N} \frac{\Delta^2_n\langle x,u_n\rangle^2 + \Pi^2_n\sigma^2_n \, \Delta_n}{\left(\Pi_n \sigma_n^2 + \Delta_n\right)^2} \\
        = \sum_{n \in \N} \frac{\frac{\langle x,u_n\rangle^2}{\Pi_n}\Delta_n + \sigma_n^2\Pi_n}{\Pi_n \sigma_n^2 + \Delta_n} && \frac{\Pi_n \, \Delta_n}{\Pi_n \sigma_n^2 + \Delta_n} \\ \leq \max\left\{1,\sup_{n\in \N} \frac{\langle x,u_n\rangle^2}{\Pi_n}\right\} && \sum_{n \in \N}\frac{\Pi_n \, \Delta_n}{\Pi_n \sigma_n^2 + \Delta_n}.
    \end{eqnarray*}
Therefore, the convergence rates stated in the theorems above also hold for fixed $x$ with $\langle x,u_n \rangle^2 = O(\Pi_n)$.

Let us mention that from the additional factor $\frac{\frac{\langle x,u_n\rangle^2}{\Pi_n}\Delta_n + \sigma_n^2\Pi_n}{\Pi_n \sigma_n^2 + \Delta_n}$ in the sum above, we see how the relative smoothness of the special solution $x$ (decay of $\langle x, u_n \rangle^2$) and the training data set impacts the convergence. Clearly, if the quotient $\frac{\langle x,u_n\rangle^2}{\Pi_n}$ is tending to infinity, we obtain a slower convergence for the regularization. If $x$  is much smoother than the training data set, we shall obtain an improved convergence rate. Indeed, under the natural condition 
$\frac{\sigma_n^2 \Pi_n}{\Delta_n} \rightarrow 0$, we then obtain 
$$\frac{\frac{\langle x,u_n\rangle^2}{\Pi_n}\Delta_n + \sigma_n^2\Pi_n}{\Pi_n \sigma_n^2 + \Delta_n}
\approx \frac{\langle x,u_n\rangle^2}{\Pi_n} $$
unless $\frac{\langle x,u_n\rangle^2}{\Pi_n} $  decays as fast or even faster than $\frac{\sigma_n^2 \Pi_n}{\Delta_n}$, which is then the correct asymptotic of the full quotient. The latter represents a saturation effect inherent in many regularization methods for ill-posed inverse problems (cf. \cite{engl1996regularization_MB,mathe2004saturation_MB}), which is now depending on the underlying properties of the training data.
\end{remark}
\section{Unsupervised data-driven regularization based on diagonal frame decompositions}
One shortcoming of the spectral parametrization $R$ described before is that it is tied to the singular system of $A$ which might not fit the expected properties of samples from $P_x$ and $P_\varepsilon$. In \cite{Ebner2023Frames_MB, Hubmer2022Frames_MB} it is proposed to instead use a diagonal decomposition based on frames (see e.g., \cite[Chapter 3]{Daubechies1992Wavelets_MB}). In this section we want to generalize our findings from the previous section to this setting. Motivated by the supervised learning objective \eqref{mb_eq:mseobjective}, we derive unsupervised, data-driven regularization based on so-called diagonal frame decompositions. For a more general view on frame-based regularizations, including non-linear filters and learning approaches, we refer the reader to \cite{Ebner2025Frames_MB}.

We start by stating some basic definitions and properties on frames from \cite{Daubechies1992Wavelets_MB}.
Frames can be seen as a generalization of orthonormal bases, where the orthogonality condition is weakened. 
\begin{definition}[Frame]
 A sequence $(\varphi_n)_{n\in\N} \subset Z$ for a linear subspace $Z$ of $X$ is called a \emph{frame} if there exist constants $0 < a \leq b < \infty$ such that for any $x \in Z$ 
\begin{equation}\label{mb_eq:framebounds}
    a\,\|x\|^2 \leq \sum_{n \in \N} \langle x, \varphi_n\rangle^2 \leq b \|x\|^2.
\end{equation}
\end{definition}

Frames can be used to decompose data into coefficients and to reassemble coefficients to data, just like orthonormal bases. Given a frame $(\varphi_n)_{n\in\N}$ we call $F: X \rightarrow \ell^2(\N)$ 
\begin{equation*}
    Fx := (\langle x, \varphi_n\rangle)_{n \in \N}
\end{equation*}
the analysis operator, and its adjoint $F^*: \ell^2(\N) \rightarrow X$, which is given for $c = \{c_n\}_{n\in \N} \in \ell^2(\N)$ by 
\begin{equation*}
    F^*c = \sum_{n\in \N} c_n\, \varphi_n,
\end{equation*}
the synthesis operator. 
In general, it does not hold that $F^*F = \operatorname{Id}$, but the frame bounds described by \eqref{mb_eq:framebounds} ensure that $F^*F$ is invertible on $Z$ (see \cite[Lemma 3.2.2]{Daubechies1992Wavelets_MB}).
Therefore, we can define the so-called dual frame $(\bar{\varphi}_n)_{n\in\N}$
\begin{equation*}
    \bar{\varphi}_n := (F^*F)^{-1}\varphi_n,
\end{equation*}
as well as the corresponding analysis operator $\bar{F}$ and synthesis operator $\bar{F}^*$. It is shown in \cite[Proposition 3.2.3]{Daubechies1992Wavelets_MB} that $\bar{\varphi}$ is indeed a frame (with constants $1/a$ and $1/b$) and that
\begin{equation*}
    \bar{F}^*Fx = \sum_{n \in \N} \langle x, \varphi_n \rangle \bar{\varphi}_n = x,
\end{equation*}
as well as 
\begin{equation*}
    F^*\bar{F}x = \sum_{n \in \N} \langle x, \bar{\varphi}_n \rangle \varphi_n = x,
\end{equation*}
for any $x \in Z$. To minimize confusion caused by the possible non-orthogonality of $\varphi$ in the series expression, we use the notation with the analysis and synthesis operators as often as possible. Thus, we state a practical implication of \eqref{mb_eq:framebounds} for $F^*$ and $\bar{F}^*$ in the following Lemma.
\begin{lemma}\label{mb_lem:F*bound}
    Let $F^*$ be the synthesis operator corresponding to a frame $\varphi$ for $Z \subseteq X$ fulfilling \eqref{mb_eq:framebounds} with constants $0 < a \leq b < \infty$. Further $\bar{F}^*$ be the synthesis operator corresponding to the dual frame $\bar{\varphi}$. Then it holds that
    \begin{equation*}
        \|F^*c\|^2 \leq b\, \|c\|^2_{\ell^2} \qquad \text{and} \qquad \|\bar{F}^*c\|^2 \leq \frac{1}{a}\, \|c\|^2_{\ell^2}
    \end{equation*}
    for any $c \in \ell^2$.
\end{lemma}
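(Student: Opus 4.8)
The plan is to prove the bound on the synthesis operator $F^*$ directly from the upper frame bound via duality, and then apply the same reasoning to the dual frame $\bar{F}^*$, using that the dual frame has frame bounds $1/b$ and $1/a$.

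First I would establish the bound for $F^*$. The key observation is that the upper frame inequality in \eqref{mb_eq:framebounds} states precisely that the analysis operator $F: Z \to \ell^2(\N)$ is bounded with $\|Fx\|_{\ell^2}^2 \leq b\|x\|^2$, i.e. $\|F\| \leq \sqrt{b}$. Since the synthesis operator $F^*$ is the adjoint of $F$, its operator norm equals that of $F$, so $\|F^*\| = \|F\| \leq \sqrt{b}$, which gives $\|F^*c\|^2 \leq b\|c\|_{\ell^2}^2$ for all $c \in \ell^2$. If one prefers to avoid invoking equality of adjoint norms as a black box, the same estimate follows by a direct duality argument: for any $c \in \ell^2$ one writes $\|F^*c\|^2 = \langle F^*c, F^*c\rangle = \langle c, FF^*c\rangle_{\ell^2} \leq \|c\|_{\ell^2}\|FF^*c\|_{\ell^2}$, and then applies the upper frame bound to the element $F^*c \in Z$ to get $\|FF^*c\|_{\ell^2}^2 \leq b\|F^*c\|^2$; combining these and cancelling one factor of $\|F^*c\|$ yields the claim.

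Next I would treat $\bar{F}^*$ by the identical argument applied to the dual frame. The excerpt has already recalled from \cite[Proposition 3.2.3]{Daubechies1992Wavelets_MB} that $\bar{\varphi}$ is a frame with bounds $1/b$ and $1/a$; in particular its upper frame bound is $1/a$. Feeding this upper bound into the argument from the previous step — with $b$ replaced by $1/a$ — immediately gives $\|\bar{F}^*c\|^2 \leq \frac{1}{a}\|c\|_{\ell^2}^2$, which is the second assertion.

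There is essentially no hard obstacle here; the statement is a standard consequence of the frame definition. The only point requiring a little care is bookkeeping of which constant plays the role of the upper frame bound for the dual frame: since the dual frame has bounds $1/b \leq 1/a$, the \emph{upper} bound is $1/a$ (not $1/b$), and this is exactly the constant that appears in the desired inequality. Once that is pinned down, both estimates are two-line computations.
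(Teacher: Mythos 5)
Your proof is correct and follows exactly the paper's argument: the upper frame bound gives $\|F\|^2 \leq b$, equality of adjoint norms transfers this to $F^*$, and the dual frame's upper bound $1/a$ handles $\bar{F}^*$. The extra direct duality computation is a nice self-contained alternative but not a substantively different route.
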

\begin{proof}
It follows directly from \eqref{mb_eq:framebounds} and the definition of $F$ that $\|F\|^2 \leq b$.
Since $\|F\| = \|F^*\|$ this proves the inequality for $F^*$. For $\bar{F}^*$ we can use the same arguments since $\bar{\varphi}$ is a frame with upper frame bound constant $1/a$.
\end{proof}
The idea of diagonal frame decomposition is now to write the forward operator $A$ as a diagonal operation on frame coefficients encoding data in $X$ and $Y$, {  similar as} the singular value expansion \eqref{mb_eq:svd}.
\begin{definition}[Diagonal Frame Decomposition, {\cite[Definition 2.4]{Ebner2023Frames_MB}}]\label{mb_def:dfd}
    A system $(\varphi, \psi , \kappa) = (\varphi_n, \psi_n, \kappa_n)_{n\in\N}$ is called a \emph{diagonal frame decomposition} of $A$ if 
    \begin{enumerate}[label=(\arabic*)]
        \item $\varphi = (\varphi_n)_{n \in \N}$ is a frame for $\mathcal{N}(A)^\perp \subseteq X$,
        \item $\psi = (\psi_n)_{n \in \N}$ is a frame for $\overline{\mathcal{R}(A)} \subseteq Y$,
        \item and the quasi-singular values $\kappa = (\kappa_n)_{n\in\N} \subset \R_{>0}$ fulfill
        \begin{equation*}
            A^*\psi_n = \kappa_n \varphi_n  \qquad \text{for all } n\in \N.
        \end{equation*}
    \end{enumerate}
\end{definition}
\begin{remark}\label{mb_rem:quasisingeq}
    The last condition in Definition \ref{mb_def:dfd} can be rewritten in terms of $F_\varphi$ and $F_\psi$, i.e., the analysis operators corresponding to $\varphi$ and $\psi$, as 
    \begin{equation*}
    F_{\psi}(Ax) = \kappa \cdot F_{\varphi}(x),
    \end{equation*}
    for any $x \in X$.
\end{remark}
It is shown in \cite[Theorem 2.9]{Ebner2023Frames_MB} that a diagonal frame decomposition $(\varphi, \psi, \kappa)$ of $A$ decomposes the pseudo-inverse $A^\dagger$ analogously to \eqref{mb_eq:pseudoinv},
\begin{equation*}
    A^\dagger y = \sum_{n \in \N} \frac{1}{\kappa_n}\, \langle y, \psi_n\rangle \,\bar{\varphi}_n = \bar{F}_\varphi^*(1/\kappa\cdot F_\psi y^\delta)
\end{equation*}
for any $y \in \mathcal{D}(A^\dagger)$. 
The essential generalization with respect to \eqref{mb_eq:pseudoinv} is that neither $\varphi$ nor $\psi$ are required to be orthonormal, and therefore, the coefficients have to be reassembled with the synthesis operator corresponding to the \textit{dual} frame $\bar{\varphi}$. Together with Remark \ref{mb_rem:quasisingeq} this decomposition of $A^\dagger$ further yields that
\begin{equation}\label{mb_eq:projnperp}
    A^\dagger A  = \bar{F}^*_{\varphi}F_{\varphi}    
\end{equation}
for any $x \in X$, which means that $\bar{F}^*_{\varphi}F_{\varphi}$ performs the orthogonal projection onto $\mathcal{N}(A)^\perp$. It is now natural to consider the frame generalization of the spectral reconstruction operators \eqref{mb_eq:spectralreg},
\begin{equation}\label{mb_eq:framerec}
    R[g](y^\delta) = \sum_{n \in \N} g_n\, \langle y^\delta, \psi_n\rangle\,\bar{\varphi}_n = \bar{F}_\varphi^*(g\cdot F_\psi y^\delta),
\end{equation}
where we omit the dependence of $R$ on $\varphi$ and $\psi$ for the sake of readability.
With such reconstruction operators we can again consider the supervised learning problem which aims to solve \eqref{mb_eq:mseobjective}.
Since $R$ still maps to $\mathcal{N}(A)^\perp$, solving \eqref{mb_eq:mseobjective} is equivalent to minimizing
\begin{align*}
    &\expect_{x \sim P_x, \varepsilon \sim P_\varepsilon}\left[ \left\| A^\dagger Ax - R[g](Ax + \varepsilon)\right\|^2 \right]\\
    &=\expect_{x \sim P_x}\left[ \left\| \bar{F}^*_{\varphi}F_{\varphi} x - \bar{F}_\varphi^*(\kappa \cdot g\cdot F_\varphi (x))\right\|^2 \right] + \expect_{\varepsilon \sim P_\varepsilon}\left[ \left\| \bar{F}_\varphi^*(g\cdot F_\psi \varepsilon)\right\|^2 \right],
\end{align*}
where, as in \eqref{mb_eq:errordecomp}, we use that $\varepsilon$ has zero mean and is not correlated to $x$ to decompose the error, and the identities derived in \eqref{mb_eq:projnperp} and Remark \ref{mb_rem:quasisingeq} to rewrite the operators. The above form allows the direct application of Lemma \ref{mb_lem:F*bound} that yields
\begin{align}\label{mb_eq:upperboundframeerror}
    &a_\varphi\,\expect_{x \sim P_x, \varepsilon \sim P_\varepsilon}\left[ \left\| A^\dagger Ax - R[g](Ax + \varepsilon)\right\|^2 \right]\notag \\&\leq  \expect_{x \sim P_x}\left[ \left\|(1-\kappa \cdot g)\,F_{\varphi} x\right\|_{\ell^2}^2 \right]\; + \;\expect_{\varepsilon \sim P_\varepsilon}\left[ \left\|g\cdot F_\psi \varepsilon\right\|_{\ell^2}^2 \right]\\
    &= \sum_{n \in \N} (1-\kappa_ng_n)^2\, \expect_{x \sim P_x}\left[ \langle x, \varphi_n\rangle^2\right] + g_n^2\,\expect_{\varepsilon \sim P_\varepsilon}\left[ \langle \varepsilon, \psi_n\rangle^2\right],\notag
\end{align}
where $a_\varphi$ denotes the constant for the lower bound of $\varphi$.

\noindent Assuming that $\expect_{x \sim P_x}\left[ \langle x, \varphi_n\rangle^2\right] > 0$ for any $n > 0$ and $\expect_{x \sim P_x}\left[ \| x\|^2\right] < \infty$, this upper bound has a unique minimizer $g^*$ given by 
\begin{equation*}
    g^* :=
    \left(\frac{\kappa_n\, \expect_{x \sim P_x}\left[ \langle x, \varphi_n\rangle^2\right]}{\kappa_n^2 \expect_{x \sim P_x}\left[ \langle x, \varphi_n\rangle^2\right] + \expect_{\varepsilon \sim P_\varepsilon}\left[ \langle \varepsilon, \psi_n\rangle^2\right]}\right)_{n \in \N}.
\end{equation*}
This turns out to be the generalization of $g^{\text{MSE}}_{P_{\varepsilon},P_x}$ in \eqref{mb_eq:gmse} if we expand the definition of $\Pi_n$, $\Delta_n$ to 
\begin{equation}\label{mb_eq:generalizedDeltaPi}
    \Pi_n = \expect_{x \sim P_x}\left[\left|\langle x, \varphi_n\rangle\right|^2\right] \qquad \text{and} \qquad \Delta_n = \expect_{x \sim P_{\varepsilon}}\left[\left|\langle \varepsilon, \psi_n\rangle\right|^2\right]
\end{equation}
for any choice of frames $\varphi$, $\psi$ that belong to a diagonal frame decomposition $(\varphi, \psi, \kappa)$ of $A$. We note that using this generalized definition, we still obtain summability of the $\Pi_n$, as
    \begin{equation*}
    \sum_{n \in \N} \Pi_n = \expect_{x\sim P_x}\left[\sum_{n \in \N}\langle x,\varphi_n\rangle^2\right] \leq b_\varphi\, \expect_{x\sim P_x}\left[\|x\|^2\right] < \infty,
\end{equation*}
where we use the upper frame bound \eqref{mb_eq:framebounds} for the frame $\varphi$ with constant $b_\varphi < \infty$.
Further, inserting \eqref{mb_eq:generalizedDeltaPi} into \eqref{mb_eq:upperboundframeerror}, we derive that
\begin{align*}
        & a_\varphi\,\expect_{x \sim P_x, \varepsilon \sim P_\varepsilon}\left[ \left\| A^\dagger Ax - R^{\text{MSE}^*}_{ P_\varepsilon}(Ax+\varepsilon)\right\|^2 \right] \\&\leq \sum_{n \in \N} (1-\kappa_n(g^{\text{MSE}^*}_{P_\varepsilon,P_x}))^2\, \Pi_n + (g^{\text{MSE}^*}_{P_\varepsilon,P_x})_n^2\,\Delta_n = \sum_{n\in \N} \frac{\Pi_n \Delta_n}{\kappa_n^2\Pi_n + \Delta_n}.
    \end{align*}
Combining this estimate with the summability of $\Pi_n$, we can generalize most of the results from Section \ref{mb_sec:convrates} to the case of frame-based reconstruction operators. Therefore, we transfer the noise level defined by \eqref{mb_eq:noiselevelDelta} to a given diagonal frame decomposition $(\varphi, \psi, \kappa)$ as
\begin{equation*}
    \boldsymbol{\delta}(P_\varepsilon) = \sqrt{\sup_{n \in \N}\expect_{\varepsilon \sim P_\varepsilon}\left[\left|\langle\varepsilon, \psi_n\rangle\right|^2\right]}.
\end{equation*}
\begin{definition}
    Let $(\varphi, \psi, \kappa)$ be a diagonal frame decomposition of $A$ and consider the corresponding generalized definitions \eqref{mb_eq:framerec} of $R$ and \eqref{mb_eq:generalizedDeltaPi} of $\Pi_n$, $\Delta_n$. Then we define
    \begin{equation*}
        R^{\text{MSE}^*}_{P_\varepsilon} := R[g^{\text{MSE}^*}_{P_\varepsilon,P_x}],  
    \end{equation*}
    {  where}
    \begin{equation*}
    g^{\text{MSE}^*}_{P_\varepsilon,P_x} :=  \left(\frac{\kappa_n\, \Pi_n}{\kappa_n^2 \Pi_n + \Delta_n} \right)_{n \in \N}.
\end{equation*}
\end{definition}
\begin{theorem}
    For any $P_\varepsilon \in \Psi$, let there exist a constant $c > 0$ such that 
    \begin{equation}\label{mb_eq:cont_frames} \Delta_n \geq c\,\kappa_n \Pi_n
    \end{equation} 
    for $n$ large enough. Then, the family $\left \lbrace R^{\text{MSE}^*}_{ P_\varepsilon}\right\rbrace_{P_\varepsilon \in \Psi}$ is a convergent data-driven regularization over the fixed distribution $P_x$. In particular it holds that
     \begin{equation*}
        \lim_{\delta \rightarrow 0}\,\sup\left\lbrace \,\expect_{x \sim P_x, \varepsilon \sim P_\varepsilon}\left[ \left\| A^\dagger Ax - R^{\text{MSE}^*}_{ P_\varepsilon}(Ax+\varepsilon)\right\|^2 \right]\,\Bigl | \, P_\varepsilon \in \Psi,\,\boldsymbol{\delta}(P_\varepsilon) \leq \delta \,\right\rbrace = 0.
        \end{equation*}
        If there exist constants $a > 1$ and $b \in \R$, such that $P_x$ fulfills $\Pi_n = O(n^{-a})$ and $\Delta_n/\kappa_n^2 = O(\delta^2 n^b)$ holds uniformly for all $P_\varepsilon \in \Psi$, the speed of convergence is of order $\delta^{2\frac{a-1}{a+b}}$. 
        If there exist constants $\mu \geq 0$ and $c < \infty$, such that $P_x$ fulfills $\Pi_n = \kappa_n^{4\mu}\beta_n$ and for all $P_\varepsilon \in \Psi$ there exist $\gamma_n$ such that $\Delta_n = \delta^2\gamma_n$ and \eqref{mb_eq:noisedatasum} holds, the speed of convergence is of order $\delta^{2\frac{2\mu}{1+2\mu}}$.\end{theorem}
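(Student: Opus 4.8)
The plan is to reduce all three assertions to controlling the single scalar series
\[
S(P_\varepsilon) := \sum_{n\in\N} \frac{\Pi_n\,\Delta_n}{\kappa_n^2\Pi_n + \Delta_n},
\]
exactly as in the singular-value setting of Section \ref{mb_sec:convrates}. Indeed, the displayed estimate immediately preceding the theorem already establishes
\[
a_\varphi\,\expect_{x\sim P_x,\varepsilon\sim P_\varepsilon}\!\left[\left\|A^\dagger A x - R^{\text{MSE}^*}_{P_\varepsilon}(Ax+\varepsilon)\right\|^2\right] \;\leq\; S(P_\varepsilon),
\]
which is, up to the harmless positive constant $1/a_\varphi$ and the replacement $\sigma_n\mapsto\kappa_n$, identical to the exact error decomposition \eqref{mb_eq:errordecomp} of the orthonormal case. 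Since a fixed positive multiplicative constant affects neither whether a limit vanishes nor the order in $\delta$, and since $\sum_n\Pi_n<\infty$ was already obtained above from the upper frame bound $b_\varphi$, every argument of Section \ref{mb_sec:convrates} transfers verbatim with $\sigma_n$ replaced by $\kappa_n$.

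Before invoking those arguments I would first verify continuity, which is what makes $\{R^{\text{MSE}^*}_{P_\varepsilon}\}$ a data-driven regularization at all. It suffices to bound $g^{\text{MSE}^*}_{P_\varepsilon,P_x}$ uniformly: for the finitely many indices where \eqref{mb_eq:cont_frames} may fail the coefficients are finite, while for $n$ large \eqref{mb_eq:cont_frames} gives
\[
\frac{\kappa_n\Pi_n}{\kappa_n^2\Pi_n+\Delta_n} \;\leq\; \frac{\kappa_n\Pi_n}{\Delta_n} \;\leq\; \frac{1}{c}.
\]
With $g:=g^{\text{MSE}^*}_{P_\varepsilon,P_x}$ bounded, the map $y^\delta\mapsto R^{\text{MSE}^*}_{P_\varepsilon}(y^\delta)=\bar{F}_\varphi^*(g\cdot F_\psi y^\delta)$ is a composition of the bounded analysis operator $F_\psi$, multiplication by the bounded sequence $g$, and the synthesis operator $\bar{F}_\varphi^*$ (bounded by Lemma \ref{mb_lem:F*bound}), hence continuous.

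For the bare convergence statement I would mirror the split \eqref{mb_eq:splitsum}, namely
\[
S(P_\varepsilon) \;\leq\; \sum_{n=1}^N \frac{\Delta_n}{\kappa_n^2} \;+\; \sum_{n=N+1}^\infty \Pi_n .
\]
The noise-level definition \eqref{mb_eq:noiselevelDelta}, transferred to $\psi$, yields $\Delta_n\leq\boldsymbol{\delta}(P_\varepsilon)^2\leq\delta^2$ uniformly in $n$, so for fixed $N$ the head is at most $\delta^2\sum_{n=1}^N\kappa_n^{-2}\to0$ as $\delta\to0$, while summability of $\Pi_n$ makes the tail smaller than any prescribed $\eps$ once $N$ is large. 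The usual two-step choice (first $N$, then $\delta$) produces a bound that is uniform over $\{P_\varepsilon\in\Psi:\boldsymbol{\delta}(P_\varepsilon)\leq\delta\}$ and tends to zero.

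Finally, the two rate claims are precisely the frame translations of Assumptions \ref{mb_assmp:decayrates} and \ref{mb_assmp:sourcecondition} with $\sigma_n\mapsto\kappa_n$, so $S(P_\varepsilon)\lesssim\delta^{2\frac{a-1}{a+b}}$ and $S(P_\varepsilon)\lesssim\delta^{2\frac{2\mu}{1+2\mu}}$ follow directly from the proofs of Theorems \ref{mb_thm:convratedecayrates} and \ref{mb_thm:convratesourcecondition} in Appendices A and B. The main point requiring care, and the only genuinely new step relative to the orthonormal case, is the bookkeeping that each borrowed argument uses solely the algebraic form of the summand and the summability of $\Pi_n$ rather than orthonormality of the bases; once that is checked, multiplying through by $1/a_\varphi$ preserves every order and the theorem follows.
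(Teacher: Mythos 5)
Your proposal is correct and follows essentially the same route as the paper: the paper's proof simply states that convergence follows analogously to Theorem \ref{mb_thm:convmse} and that the rates follow from Lemmas \ref{mb_lem:convratedoubleexp} and \ref{mb_lem:decaysourcecondition} with $\sigma_n$ replaced by $\kappa_n$, which is exactly the reduction to $S(P_\varepsilon)$ you carry out. Your explicit verification of continuity via the bound $g_n^{\text{MSE}^*}\leq 1/c$ and your remark that only an upper bound (up to $1/a_\varphi$) rather than an exact error identity is available in the frame case are details the paper leaves implicit, but they match its intent.
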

\begin{proof}
The proof of convergence follows analogously to Theorem \ref{mb_thm:convmse}. The decay rates follow directly from Lemmas \ref{mb_lem:convratedoubleexp} and \ref{mb_lem:decaysourcecondition} by substituting the singular values $\sigma_n$ by the quasi-singular values $\kappa_n$.
\end{proof}
 At this point we would like to stress that, although it is clearly data-driven by the coefficients $\Pi_n$ and $\Delta_n$, the regularizer $R^{\text{MSE}^*}_{P_\varepsilon}$ is in general not obtained by the supervised learning approach. In fact, the only case where we know that $R^{\text{MSE}^*}_{P_\varepsilon}$ minimizes the learning objective \eqref{mb_eq:mseobjective} is when the inequality \eqref{mb_eq:upperboundframeerror} becomes an equation. 

\section{Regularization aspects of adversarial training}
To motivate the use of adversarial training as a regularization let us consider the following setting.   In the context of Chapter \ref{ch:_ClassRegMB} we consider a yet undetermined spectral regularizer $R_\alpha := R[ g(\alpha)]$ \eqref{mb_eq:spectralreg} with coefficients $g_n$ depending on the parameter $\alpha\in \R_{>0}$.
Let us further fix the parameter choice rule to simply be  $\alpha(\delta,y^\delta)=\delta$. 
Since $R_\alpha := R[ g(\alpha)]$ the Definition \ref{def: Reg_MB} is equivalent to
\begin{gather*}
    \lim_{\delta \rightarrow 0} \sup_{\varepsilon\in\overline{B}_\delta(0)} \| R_\delta(Ax +\varepsilon)- A^\dagger Ax\|^2 =0 
\end{gather*}
or 
\begin{gather*}
    \lim_{\delta \rightarrow 0} \sup_{\varepsilon\in\overline{B}_\delta(0)} \| R_\delta(Ax +\varepsilon)- x\|^2-\|x_0 \|^2 =0,
\end{gather*}
where $x_0\in \mathcal{N}(A)$  is the unique projection of $x$ onto the null space of the operator $A$.
\begin{remark}
Choosing the term  $\| R[g](Ax+\varepsilon)+x\|^2$ in the following minimization objective is more in line with the classical machine learning approach but we have the equivalence
    \begin{align*}
        \| R[g](Ax+\varepsilon)+x\|^2=\| R[g](Ax+\varepsilon)+A^\dagger A x\|^2 +\|x_0\|^2
    \end{align*}
where $x_0\in \mathcal{N}(A)$  is the unique projection of $x$ onto the null space of the operator.
Since $\|x_0\|^2$ is a $g$ independent constant we can always switch between the two terms in the minimization objectives.
\end{remark}
Given a data distribution $P_x$ if we aim to have a minimal regularization error over all $x$ it seems natural to choose the parameters $g_n(\delta)$ such that
\begin{align}\label{eq: AdTr_MB}
g_n^{adv_2}\in\arg\min_{g_n} \mathbb{E}_{x\sim P_x}\sup_{{\|\varepsilon\|\leq \delta} }\| R_{D}(Ax+\varepsilon)+x\|^2.
\end{align}
In machine learning this training objective is also called adversarial training (cf. \cite{goodfellow2014explaining_MB,peck2023introduction_MB, weigand2024adversarial_MB} ).
A priori it is unknown if the infimum in the above problem is attained by a minimizer. But as we will see shortly this is indeed the case. 
\begin{lemma} \label{lm: Con_MB}The term
\begin{align*}
    \sup_{\|\varepsilon\|\leq \delta} \| R[g](Ax+\varepsilon)+x\|^2
\end{align*}
 is finite if and only if $R[g]$ is a bounded linear operator, i.e. $g=(g_n)_{n\in\N}\in \ell^\infty$. In this case it can be bounded by $2\|R[g](Ax)-x\|^2+2\|g\|_{\ell^\infty}^2\delta^2 $.   
\end{lemma}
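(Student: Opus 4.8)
The plan is to prove the two implications separately, exploiting that $R[g]$ is diagonal in the singular system $(u_n,v_n,\sigma_n)_{n\in\N}$ so that its action decouples across modes. Throughout I would write the quantity to be controlled as $R[g](Ax+\varepsilon)+x = r + R[g]\varepsilon$, where $r := R[g](Ax)+x$ is a fixed element independent of $\varepsilon$; the precise sign in front of $x$ is immaterial to the argument, since this offset is common to all admissible perturbations and cancels whenever I compare two of them.

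For the sufficiency direction I would first record that $g\in\ell^\infty$ makes $R[g]$ a bounded operator with $\|R[g]\|\leq\|g\|_{\ell^\infty}$: since $(u_n)$ and $(v_n)$ are orthonormal, $\|R[g]y\|^2=\sum_n g_n^2\langle y,v_n\rangle^2\leq\|g\|_{\ell^\infty}^2\|y\|^2$. Then for any $\|\varepsilon\|\leq\delta$ the triangle inequality gives $\|r+R[g]\varepsilon\|\leq\|r\|+\|g\|_{\ell^\infty}\delta$, and the elementary estimate $(s+t)^2\leq 2s^2+2t^2$ yields $\|R[g](Ax+\varepsilon)+x\|^2\leq 2\|r\|^2+2\|g\|_{\ell^\infty}^2\delta^2$. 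Taking the supremum over the ball produces exactly the claimed bound, with finiteness automatic.

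For the necessity direction I would argue by probing the supremum with single-mode perturbations. Assuming the supremum equals some $S<\infty$, the terms at $\varepsilon=\pm\delta v_n$ (both admissible, as $\|v_n\|=1$) have squared norm at most $S$, hence finite norm; by linearity and $R[g]v_n=g_n u_n$ their difference is $\big(R[g](Ax+\delta v_n)+x\big)-\big(R[g](Ax-\delta v_n)+x\big)=2\delta g_n u_n$, of norm $2\delta|g_n|$. The triangle inequality then forces $2\delta|g_n|\leq 2\sqrt{S}$, i.e. $|g_n|\leq\sqrt{S}/\delta$ uniformly in $n$, so $g\in\ell^\infty$.

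The one subtlety to handle carefully is that, when $g\notin\ell^\infty$, the operator $R[g]$ is only densely defined and $r=R[g](Ax)+x$ need not lie in $X$ at all. This is precisely why the cancellation of the common offset $r$ is essential: the necessity argument never requires $r$ to be finite, only the difference of two probed terms, which is the well-defined vector $2\delta g_n u_n$; and finiteness of the supremum is exactly what guarantees each probed term is itself a finite-norm vector so that the triangle inequality applies. I expect this well-definedness and domain bookkeeping to be the only genuine obstacle, the remainder being the orthonormality of the singular system together with elementary inequalities.
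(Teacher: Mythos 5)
Your proof is correct, and it diverges from the paper's in one worthwhile place. For the sufficiency direction the two arguments are essentially the same estimate at different resolutions: the paper expands $\|R[g](Ax+\varepsilon)-x\|^2$ mode-by-mode via \eqref{eq: Loss_MB} and kills the cross term with Cauchy--Schwarz and Young, while you work at the operator level with $\|R[g]\|\le\|g\|_{\ell^\infty}$ and $(s+t)^2\le 2s^2+2t^2$; both land on the stated bound. For the necessity direction your route is genuinely different and cleaner. The paper argues directly: if $g\notin\ell^\infty$ it picks sign-aligned single-mode perturbations $\varepsilon_n=\sign(\cdot)\,\delta v_n$ so that the cross term is nonnegative and the contribution $g_n^2\delta^2$ blows up along a subsequence (the paper's phrase ``$g_n\to\infty$'' should really read ``$|g_{n_k}|\to\infty$ along a subsequence''). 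You instead prove the contrapositive by probing with $\pm\delta v_n$ and taking the difference of the two probed vectors, which collapses to the single well-defined vector $2\delta g_n u_n$ and yields the quantitative bound $\|g\|_{\ell^\infty}\le\sqrt{S}/\delta$. This buys two things: it avoids any sign bookkeeping for the cross term, and it handles the domain issue (that $R[g](Ax)$ may fail to lie in $X$ when $g\notin\ell^\infty$) explicitly rather than implicitly --- the only caveat being that one must read $\sup_{\|\varepsilon\|\le\delta}\|\cdot\|^2$ as a supremum of the coefficient series in $[0,\infty]$ for the statement to make sense before $g\in\ell^\infty$ is established, which your argument accommodates since finiteness of $S$ forces each probed series to converge. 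The sign discrepancy between $+x$ in the supremum and $-x$ in the stated bound is present in the paper itself and is immaterial, as you note.
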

\begin{proof}
If $g\in \ell^\infty$, then by \eqref{eq: Loss_MB}, Cauch--Schwartz and Young's inequality we can estimate 
\begin{align*}
     \| R[g](Ax+\varepsilon)+x\|^2&\leq\|x_0\|^2+2\sum_n  (1-\sigma_n g_n)^2\langle x, u_n\rangle^2 +g_n^2\langle \varepsilon,v_n\rangle^2\\
     &\leq  \|x_0\|^2+2\sum_n  (1-\sigma_n g_n)^2\langle x, u_n\rangle^2 + \|g_n\|_{\ell^\infty}^2 \delta^2.
\end{align*}
On the other hand if $g\not\in l^\infty$ then $g_n \rightarrow \infty$ and thus choosing $\varepsilon_n = \sign(-2(1-\sigma_n g_n) g_n \langle x,u_n\rangle) \delta v_n$ the sequence $\|R[g](Ax+\varepsilon_n)+x\|^2 \rightarrow +\infty$.
\end{proof}
Due to Lemma \ref{lm: Con_MB} minimizers of \eqref{eq: AdTr_MB} have to lie in $\ell^\infty$ and thus we can restrict ourselves to the case $g\in \ell^\infty$.
Since unlike in the MSE case we have no closed form for the minimizer $g^{adv_2}$. We have to rely on the the direct method of calculus of variations to show the existence of minimizers. The two essential ingredients for this method are the weak* lower semicontinuity and the compactness of sublevels for the functional
\begin{align*}
    \bar{f}: g \mapsto \mathbb{E}_{x\sim P_x} \sup_{\|\varepsilon\|\leq \delta}  \|R[g](Ax+\varepsilon) -A^\dagger Ax\|^2.
\end{align*}
The proofs of the following two lemmas can be found in Appendix C.
\begin{lemma}\label{lm:fR_MB}
The function 
    \begin{align*}
        f:(g,x,\varepsilon)\in \ell^\infty\times X \times Y \mapsto \|R[g](Ax+\varepsilon) -A^\dagger Ax\|^2
    \end{align*}
is weakly * lower semi continuous in $g$ and jointly continuous in $x$ and $\varepsilon$. Further for a  $\delta>0$ the functional
\begin{align*}
    \tilde{f}: (g,x) \in \ell^\infty \times X \mapsto \sup_{\|\varepsilon\|\leq \delta}  \|R[g](Ax+\varepsilon) -A^\dagger Ax\|^2
\end{align*}
is Borel measurable in $x$ and weakly* lower semi continuous in $g$. Lastly the functional 
\begin{align*}
    \bar{f}: g\in \ell^\infty  \mapsto \mathbb{E}_{x\sim P_x} \sup_{\|\varepsilon\|\leq \delta}  \|R[g](Ax+\varepsilon) -A^\dagger Ax\|^2
\end{align*}
is weak* lower semi continuous.
\end{lemma}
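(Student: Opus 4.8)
The plan is to reduce everything to the spectral expansion \eqref{eq: Loss_MB}, which for fixed $(x,\varepsilon)$ expresses the integrand as a series of \emph{non-negative} terms,
\[
f(g,x,\varepsilon)=\sum_{n}\big((1-\sigma_n g_n)\langle x,u_n\rangle-g_n\langle \varepsilon,v_n\rangle\big)^2=:\sum_n c_n(g_n;x,\varepsilon),
\]
where each $c_n$ is a continuous (indeed quadratic) function of the single coordinate $g_n$. The joint continuity in $(x,\varepsilon)$ at fixed $g\in\ell^\infty$ is then immediate: $R[g]$ is a bounded linear operator (diagonal in the orthonormal systems $(u_n)_{n\in\N}$, $(v_n)_{n\in\N}$, with operator norm $\|g\|_{\ell^\infty}$), so $(x,\varepsilon)\mapsto R[g](Ax+\varepsilon)-A^\dagger Ax$ is a continuous affine map between Hilbert spaces, and composing with the continuous squared norm gives joint continuity.

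For the weak* lower semicontinuity in $g$, the key observation is that weak* convergence $g^{(k)}\to g$ in $\ell^\infty=(\ell^1)^*$, tested against the basis vectors $e_m\in\ell^1$, yields coordinatewise convergence $g_m^{(k)}\to g_m$ for every $m$. Since each $c_n$ is continuous in its coordinate, $c_n(g_n^{(k)})\to c_n(g_n)$, and Fatou's lemma for the counting measure on $\N$ gives
\[
f(g,x,\varepsilon)=\sum_n \lim_k c_n(g_n^{(k)})\le \liminf_k\sum_n c_n(g_n^{(k)})=\liminf_k f(g^{(k)},x,\varepsilon),
\]
which is exactly the asserted lower semicontinuity. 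On the bounded sublevel sets relevant to the direct method, separability of $\ell^1$ makes the weak* topology metrizable, so restricting to sequences is justified.

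For $\tilde f(g,x)=\sup_{\|\varepsilon\|\le\delta}f(g,x,\varepsilon)$, weak* lower semicontinuity in $g$ follows because a pointwise supremum of weak* lower semicontinuous functions is again weak* lower semicontinuous: for each admissible $\varepsilon$ one has $f(g,x,\varepsilon)\le\liminf_k f(g^{(k)},x,\varepsilon)\le\liminf_k\tilde f(g^{(k)},x)$, and taking the supremum over $\varepsilon$ preserves the bound. For Borel measurability in $x$ at fixed $g$, note that $x\mapsto f(g,x,\varepsilon)$ is continuous for each $\varepsilon$ by the first step, so $\tilde f(g,\cdot)$, being a pointwise supremum of continuous functions, is lower semicontinuous and hence Borel measurable; its finiteness is guaranteed by Lemma \ref{lm: Con_MB} since $g\in\ell^\infty$. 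Finally, for $\bar f(g)=\mathbb{E}_{x\sim P_x}[\tilde f(g,x)]$, given $g^{(k)}\to g$ weak* the pointwise estimate just established gives $\tilde f(g,x)\le\liminf_k\tilde f(g^{(k)},x)$ for every $x$, and since all integrands are non-negative and measurable, a second application of Fatou's lemma yields
\[
\bar f(g)=\int \tilde f(g,x)\,dP_x(x)\le\int\liminf_k\tilde f(g^{(k)},x)\,dP_x(x)\le\liminf_k\bar f(g^{(k)}),
\]
the weak* lower semicontinuity of $\bar f$.

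The main obstacle is the weak* lower semicontinuity in $g$: because weak* convergence delivers only coordinatewise, not norm, convergence of the $g^{(k)}$, one cannot pass the infinite sum to the limit as an equality, and the exchange of limit and summation must be handled through Fatou. This is precisely where the non-negativity of the squared summands $c_n$ is essential, and it explains why only an inequality (lower semicontinuity), rather than continuity, can be expected. The remaining measurability claim and the two Fatou applications are then routine.
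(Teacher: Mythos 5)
Your proof is correct, but the key step---weak* lower semicontinuity of $f$ in $g$---is handled by a genuinely different mechanism than in the paper. You exploit that weak* convergence in $\ell^\infty=(\ell^1)^*$ gives coordinatewise convergence, that each summand $c_n(g_n;x,\varepsilon)=\bigl((1-\sigma_n g_n)\langle x,u_n\rangle-g_n\langle\varepsilon,v_n\rangle\bigr)^2$ is a non-negative continuous function of the single coordinate $g_n$, and then apply Fatou's lemma for the counting measure. The paper instead splits the series into a constant, a linear and a quadratic part with $\ell^1$ coefficient sequences, observes that the first two are genuinely weak* \emph{continuous}, and proves weak* lower semicontinuity of the quadratic part via a separate lemma (Lemma \ref{lm: H1_MB}) that writes $g\mapsto\sum_n g_n^2 d_n$ as a supremum of weak*-continuous affine functionals using the convexity of $t\mapsto t^2$. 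Your route is more elementary (it avoids Lemma \ref{lm: H1_MB} entirely and works directly on the complete squares), at the price of the sequential-versus-topological issue you correctly flag: Fatou as you state it is a sequential argument, and you justify restricting to sequences by metrizability of the weak* topology on bounded sets, which is legitimate here because Lemma \ref{lm:SL_MB} gives norm-bounded sublevels; in fact your argument upgrades to nets by truncating to finite partial sums, so even this caveat is removable. The paper's supremum-of-affine-functions argument yields topological lower semicontinuity directly and isolates the weak* \emph{continuity} of the affine part, which is slightly more information, but for the purpose of the direct method the two proofs are interchangeable. The remaining steps (joint continuity in $(x,\varepsilon)$ from boundedness of $R[g]$, lower semicontinuity of the supremum $\tilde f$, Borel measurability in $x$, and Fatou for $\bar f$) coincide with the paper's.
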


\begin{lemma}(Compactness of Sublevels)\label{lm:SL_MB}
    Given a fixed $\delta>0$ we have that the functional $\bar{f}$ has weakly* compact sublevels.
\end{lemma}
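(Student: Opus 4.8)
The plan is to run the direct method's compactness ingredient as a standard Banach--Alaoglu argument, where the only nontrivial point is a coercivity-type lower bound forcing sublevels to be norm-bounded. Fix $C \geq 0$ and set $S_C := \{g \in \ell^\infty : \bar f(g) \leq C\}$. Since $\ell^\infty = (\ell^1)^*$, the Banach--Alaoglu theorem tells us that every closed ball $\overline B_M := \{g : \|g\|_{\ell^\infty} \leq M\}$ is weak* compact. By Lemma \ref{lm:fR_MB} the functional $\bar f$ is weak* lower semicontinuous, so $S_C = \bar f^{-1}\bigl((-\infty, C]\bigr)$ is weak* closed. As a weak* closed subset of a weak* compact ball is itself weak* compact, it therefore suffices to produce a radius $M = M(C,\delta)$ with $S_C \subseteq \overline B_M$.

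To obtain such an $M$ I would probe the inner supremum at the single admissible noise vector $\varepsilon = s\,\delta\, v_m$, where $m \in \N$ is fixed and $s \in \{+1,-1\}$ is a sign to be chosen; this is admissible because $\|v_m\| = 1$ and $v_m \in \overline{\mathcal R(A)} \subseteq Y$. Since the $(v_n)_{n\in\N}$ are orthonormal, $\langle \varepsilon, v_n\rangle = s\,\delta\,\delta_{nm}$, so in the coordinate expansion \eqref{eq: Loss_MB} only the $m$-th summand feels the noise while every other summand is a non-negative square. Discarding those yields, for \emph{every} fixed $x$,
\begin{equation*}
\sup_{\|\varepsilon\|\le\delta}\|R[g](Ax+\varepsilon) - A^\dagger A x\|^2 \;\geq\; \bigl((1-\sigma_m g_m)\langle x,u_m\rangle - s\,g_m\,\delta\bigr)^2 .
\end{equation*}
Because $\max_{s=\pm1}(a - s c)^2 \geq c^2$ for all reals $a,c$ (the two sign choices average to $a^2+c^2$), choosing the optimal $s$ pointwise in $x$ makes the right-hand side at least $g_m^2\,\delta^2$, uniformly in $x$. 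Taking the expectation over $x \sim P_x$ gives $\bar f(g) \geq \delta^2 g_m^2$ for each $m$, and hence $\delta^2 \|g\|_{\ell^\infty}^2 \leq \bar f(g)$.

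Consequently $S_C \subseteq \overline B_M$ with $M = \sqrt{C}/\delta$, which closes the argument via the weak* closedness and compactness observed above; this is consistent with Lemma \ref{lm: Con_MB}, since any $g \notin \ell^\infty$ has $\bar f(g) = +\infty$ and thus lies in no sublevel. I expect the only delicate point to be the lower bound $\delta^2\|g\|_{\ell^\infty}^2 \leq \bar f(g)$: one must check that the sign $s$ is allowed to depend on $x$ (it is, since the supremum is taken for each fixed $x$ before the expectation) and that dropping the off-diagonal terms is legitimate (it is, as they are squares). Everything else is a routine combination of Banach--Alaoglu with the weak* lower semicontinuity already supplied by Lemma \ref{lm:fR_MB}.
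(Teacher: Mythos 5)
Your proof is correct and follows essentially the same route as the paper: both establish the coercivity bound $\bar f(g)\geq\delta^2\|g\|_{\ell^\infty}^2$ by choosing the sign of the noise to make the cross term harmless, and then conclude via Banach--Alaoglu together with the weak* lower semicontinuity from Lemma \ref{lm:fR_MB}. Your single-coordinate test vector $\varepsilon=s\,\delta\,v_m$ and the explicit remark that sublevels are weak* closed make the argument slightly more carefully spelled out than the paper's, but the underlying idea is identical.
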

As an immediate consequence of Lemma \ref{lm:fR_MB} and \ref{lm:SL_MB} we can apply the direct method in the calculus of variations and obtain the following corollary.
\begin{cor}
    The minimum in \eqref{eq: AdTr_MB} is attained.
\end{cor}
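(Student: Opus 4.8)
The plan is to invoke the direct method of the calculus of variations exactly as the text announces. The final corollary asserts that the infimum in \eqref{eq: AdTr_MB} is attained, and the two preceding lemmas have been crafted precisely to supply the two standard hypotheses of this method. First I would observe that by Lemma~\ref{lm: Con_MB} any minimizing sequence may be taken inside $\ell^\infty$, since outside $\ell^\infty$ the functional $\bar{f}$ is identically $+\infty$; in particular the infimum of $\bar{f}$ over all admissible $g$ coincides with its infimum over $\ell^\infty$, and is finite (for example $g=0$ gives a finite value). Thus there exists a minimizing sequence $(g^{(k)})_{k\in\N}\subset\ell^\infty$ with $\bar{f}(g^{(k)})\to m:=\inf_g\bar{f}(g)<\infty$.

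Next I would extract a convergent subsequence. Fix any real number $c>m$; then for $k$ large the iterates $g^{(k)}$ all lie in the sublevel set $\{g\in\ell^\infty : \bar{f}(g)\le c\}$. By Lemma~\ref{lm:SL_MB} this sublevel set is weakly* compact, so by the sequential characterization of weak* compactness (valid here because $\ell^\infty=(\ell^1)^*$ with $\ell^1$ separable, so bounded weak* sets are metrizable and sequentially compact) there is a subsequence $(g^{(k_j)})$ and a limit $g^\star\in\ell^\infty$ with $g^{(k_j)}\rightharpoonup^* g^\star$. Finally, Lemma~\ref{lm:fR_MB} guarantees that $\bar{f}$ is weak* lower semicontinuous, so
\begin{align*}
\bar{f}(g^\star)\le\liminf_{j\to\infty}\bar{f}(g^{(k_j)})=m.
\end{align*}
Since $m$ is the infimum, the reverse inequality $\bar{f}(g^\star)\ge m$ is automatic, whence $\bar{f}(g^\star)=m$ and the minimum is attained at $g^\star$.

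The only subtle point, and the step I would treat most carefully, is the passage from abstract weak* compactness of the sublevels to sequential extraction of a convergent subsequence. Weak* compactness per se yields a convergent subnet, not a convergent subsequence; one needs the metrizability of bounded subsets of $\ell^\infty$ in the weak* topology, which follows from the separability of the predual $\ell^1$. I would make this explicit so that the Bolzano--Weierstrass-type argument on the minimizing sequence is rigorous. Everything else is the textbook template of the direct method, and since both required ingredients have already been verified in Lemmas~\ref{lm:fR_MB} and~\ref{lm:SL_MB}, the corollary follows immediately as the text itself indicates.
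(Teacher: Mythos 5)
Your proof is correct and follows exactly the route the paper intends: the paper gives no written proof for this corollary, simply invoking the direct method with Lemmas~\ref{lm:fR_MB} and~\ref{lm:SL_MB}, and your argument is the standard instantiation of that method (with the sequential weak* compactness point already covered in the paper's proof of Lemma~\ref{lm:SL_MB} via Banach--Alaoglu). Your explicit remark on metrizability of bounded weak* sets via separability of $\ell^1$ is a welcome clarification but not a departure from the paper's approach.
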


We define the regularization procedure given by adversarial training $R^{adv_2}_{\delta}\coloneqq R[g^{adv_2}]$ where  $g^{adv_2}$ are the parameters obtained for the problem \eqref{eq: AdTr_MB} with adversarial budget $\delta$.
By Lemma \ref{lm: Con_MB} $R^{adv_2}_\delta$ is a continuous operator.  To show that $R^{adv_2}_{\delta}$ is a valid regularization scheme we need to study its behavior as $\delta \rightarrow 0$. Indeed adversarial training of spectral regularizers yields a convergent regularization scheme in the following sense.
\begin{lemma}\label{lm: A2C_MB}
If the training data distribution fulfills $\mathbb{E}_{x\sim P_x} \left[ \|x\|^2\right] <+\infty$, then
 $$ \lim_{\delta \rightarrow 0}\mathbb{E}_{x\sim P_x} \sup_{\|\varepsilon\|<\delta}\| R_{\delta}^{adv_2} (Ax+\varepsilon)-A^\dagger A x\| = 0 .
 $$ 
\end{lemma}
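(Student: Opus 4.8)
The plan is to exploit the optimality of $g^{adv_2}$. Write $V(\delta)$ for the minimal value of the adversarial functional $\bar f$ of Lemma~\ref{lm:fR_MB} at budget $\delta$; by the Remark preceding \eqref{eq: AdTr_MB}, the objective in \eqref{eq: AdTr_MB} differs from $\bar f$ only by the $g$-independent constant $\mathbb{E}_{x\sim P_x}[\|x_0\|^2]$, so $g^{adv_2}$ also minimizes $\bar f$ and $V(\delta)=\bar f(g^{adv_2})$. Since $g^{adv_2}$ is a minimizer, for \emph{any} comparison coefficients $g\in\ell^\infty$, possibly depending on $\delta$, we have $V(\delta)\leq \bar f(g)$; it therefore suffices to exhibit a family $g=g(\delta)$ along which $\bar f(g(\delta))\to 0$. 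Finally, I would observe that it is enough to control the squared quantity: since the supremum over the open ball is dominated by that over the closed ball and by Jensen's inequality (with the concave square root), $\mathbb{E}_{x\sim P_x}\sup_{\|\varepsilon\|<\delta}\|R^{adv_2}_\delta(Ax+\varepsilon)-A^\dagger A x\|\leq\sqrt{V(\delta)}$, so $V(\delta)\to 0$ gives the claim.

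For the upper bound I would reuse the estimate already established in Lemma~\ref{lm: Con_MB}. Taking the expectation of $2\|R[g](Ax)-A^\dagger A x\|^2+2\|g\|_{\ell^\infty}^2\delta^2$, applying Tonelli, and using $\Pi_n=\mathbb{E}_{x\sim P_x}[\langle x,u_n\rangle^2]$ together with the identity \eqref{eq: Loss_MB} for the noise-free term yields
\[ \bar f(g)\leq 2\sum_{n\in\N}(1-\sigma_n g_n)^2\,\Pi_n+2\,\|g\|_{\ell^\infty}^2\,\delta^2, \]
valid for every $g\in\ell^\infty$. This splits the bound into a noise-free reconstruction (bias) term and a noise-amplification term, exactly as in the convergence analysis of Section~\ref{mb_sec:convrates}.

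The comparison family I would pick is classical Tikhonov, $g_n(\delta)=\sigma_n/(\sigma_n^2+\delta)$. Then $1-\sigma_n g_n(\delta)=\delta/(\sigma_n^2+\delta)\in[0,1]$, and by the arithmetic--geometric mean inequality $\|g(\delta)\|_{\ell^\infty}=\sup_n \sigma_n/(\sigma_n^2+\delta)\leq 1/(2\sqrt{\delta})$, so the noise term is bounded by $2\cdot\frac{1}{4\delta}\cdot\delta^2=\delta/2\to 0$. For the bias term, $(1-\sigma_n g_n(\delta))^2=\delta^2/(\sigma_n^2+\delta)^2\to 0$ as $\delta\to 0$ for each fixed $n$ (using $\sigma_n>0$), while $(1-\sigma_n g_n(\delta))^2\Pi_n\leq\Pi_n$ with $\sum_{n\in\N}\Pi_n\leq\mathbb{E}_{x\sim P_x}[\|x\|^2]<\infty$. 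Dominated convergence for series then forces $\sum_{n\in\N}(1-\sigma_n g_n(\delta))^2\Pi_n\to 0$.

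Combining the two pieces gives $\bar f(g(\delta))\to 0$, hence $V(\delta)\to 0$, and the lemma follows from the Jensen bound of the first paragraph. The only genuinely delicate point is the balancing of the two terms: one must let the regularization level $\alpha(\delta)$ (here $\alpha(\delta)=\delta$) tend to zero slowly enough that $\delta^2/\alpha(\delta)\to 0$ controls the noise term, yet fast enough that the bias term still vanishes, and it is precisely the hypothesis $\mathbb{E}_{x\sim P_x}[\|x\|^2]<\infty$, through the summability $\sum_n\Pi_n<\infty$, that powers the dominated-convergence argument for the bias. Everything else is routine.
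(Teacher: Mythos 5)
Your proof is correct and follows essentially the same route as the paper: both exploit optimality of $g^{adv_2}$ to reduce the claim to exhibiting a comparison family with $\|g\|_{\ell^\infty}\lesssim 1/\sqrt{\delta}$ and vanishing bias term, via the same bias--noise splitting $2\sum_n(1-\sigma_n g_n)^2\Pi_n+2\|g\|_{\ell^\infty}^2\delta^2$. The only differences are cosmetic: you use the Tikhonov filters $\sigma_n/(\sigma_n^2+\delta)$ where the paper uses the truncated family $g_n=\min(1/\sqrt{\delta},1/\sigma_n)$, and you explicitly handle the unsquared norm in the statement via Jensen's inequality, a point the paper's proof (which works entirely with squared norms) leaves implicit.
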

\begin{proof}
 We estimate  
 \begin{align*}
     \sum_n  (1-\sigma_n g_n)^2\langle x, u_n\rangle^2 +g_n^2\langle \varepsilon,v_n\rangle^2-2(1-\sigma_n g_n)g_n\langle x, u_n\rangle\langle \varepsilon,v_n\rangle\\
     \leq 2\sum_n  (1-\sigma_n g_n)^2\langle x, u_n\rangle^2 +g_n^2\langle \varepsilon,v_n\rangle^2.
 \end{align*}
 Thus
 \begin{align*}
&\min_{g_n} E_{x\sim P_x}\sup_{\|\varepsilon\|\leq\delta}  \|R_{g_n}(Ax+\varepsilon)-A^\dagger A x\|^2\\
& \qquad\leq   2\ \min_{g_n}  E_{x\sim P_x}\sum_n  (1-\sigma_n g_n)^2\langle x, u_n\rangle^2 +\sup_{\|\varepsilon\|\leq\delta}   \sum_n g_n^2\langle \varepsilon,v_n\rangle^2\\
& \qquad=2\ \min_{g_n}  \sum_n  (1-\sigma_n g_n)^2 \Pi_n + \delta^2 \|g\|_{\ell^\infty}^2\\
& \qquad= 2\min_{ r>0} \sum_n (1-\sigma_n \min(r,1/\sigma_n)) \Pi_n+\delta^2 r^2
\\
& \qquad\leq 2 \sum_n (1-\sigma_n\min(1/\sqrt{\delta},1/\sigma_n))\Pi_n+\sqrt{\delta},
 \end{align*}
 where in the last step we simply inserted  $r=\frac{1}{\sqrt{\delta}}$.
 The first term converges to $0$ since $\Pi_n \in l^1$ and $\sigma_n \rightarrow 0$ as $n\rightarrow \infty$ and for the second term the convergence is obvious.
\end{proof}

\begin{remark}
If we define the noise level $\delta(P_\varepsilon)\coloneqq \sqrt{ \mathbb{E}_{\varepsilon\sim P_\varepsilon} \|x\|^2}$, then 
\begin{align*}
    \mathbb{E}_{x\sim P_x, \varepsilon\sim P_\varepsilon} \sup_{P_\varepsilon: \delta(P_\varepsilon)  \leq \delta} \|R^{adv_2}_\delta(A x+\varepsilon) -A^\dagger A x\|^2 \\
    \leq \mathbb{E}_{x\sim P_x}\sup_{\varepsilon: \|\varepsilon\|\leq \delta} \|R^{adv_2}_\delta(A x+\varepsilon) -A^\dagger A x\|^2.
\end{align*}
This can easily be observed if we use \eqref{eq: Loss_MB} . The mixed term vanishes on the left hand side since $x$ and $\varepsilon$ are considered to be independent and $x$ has zero mean. For right hand side the mixed term can be assumed to only make positive contributions due to supremum over $\varepsilon$.
Thus by Lemma \ref{lm: A2C_MB}  we obtain
\begin{equation*}
        \lim_{\delta \rightarrow 0}\;\sup\left\lbrace \;\expect_{x \sim P_x, \varepsilon \sim P_\varepsilon}\left[ \left\| A^\dagger Ax - R_{\delta}^{adv_2}(Ax+\varepsilon)\right\|^2 \right]\;\Bigl | \; \boldsymbol{\delta}(P_\varepsilon) \leq \delta   \;\right\rbrace = 0.
    \end{equation*}
which corresponds to convergence in the sense of Definition \ref{def: ConDaR_MB} .

\end{remark}
 \subsection{$\infty$-Type adversaries}
Unfortunately it seems that solutions to the optimization problem \eqref{eq: AdTr_MB} have no closed form like for the MSE-problem. To maintain a closed form we slightly modified the problem. Instead of of the $\delta$-ball we take the supremum with respect to the set
\begin{align*}
S_\delta\coloneqq\left\{ \varepsilon\in X : |\langle \varepsilon,v_n\rangle|\leq \delta \quad \forall n\right\}
\end{align*}
Although this set is not a $\delta$-ball in the infinite dimensional case in finite dimension this set is the $\delta$-ball of a rotated $\infty$-norm. The set $S_\delta$ can be interpreted as the vicinity corresponding to a deterministic version of a white noise level $\delta$.
\begin{lemma}
The term
$$\sup_{\varepsilon\in S_\delta} \|R[g](Ax+\varepsilon)-x\|^2$$ is finite if and only if $g \in \ell^2$.
\end{lemma}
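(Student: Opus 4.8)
The plan is to reduce the supremum to an explicit, separable series in the noise coefficients $c_n := \langle \varepsilon, v_n\rangle$, and then read off the condition on $g$. First I would expand the objective as in \eqref{eq: Loss_MB}: since $R[g](Ax+\varepsilon)$ lies in $\mathcal{N}(A)^\perp$ while the null-space component $x_0$ of $x$ is orthogonal to it, an orthogonal splitting gives
\begin{equation*}
    \|R[g](Ax+\varepsilon)-x\|^2 = \|x_0\|^2 + \sum_n\big((1-\sigma_n g_n)\langle x,u_n\rangle - g_n c_n\big)^2.
\end{equation*}
The constant $\|x_0\|^2\le\|x\|^2$ is finite and independent of $g$ and $\varepsilon$, so it may be dropped; writing $a_n := (1-\sigma_n g_n)\langle x,u_n\rangle$, it remains to decide finiteness of $\sup_{\varepsilon\in S_\delta}\sum_n (a_n - g_n c_n)^2$, where the constraint reads $|c_n|\le\delta$ for every $n$.

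Next I would exploit that this series is separable in the $c_n$: each summand involves a single coordinate, and $\max_{|c_n|\le\delta}(a_n - g_n c_n)^2 = (|a_n|+|g_n|\delta)^2$, attained at a boundary point. I therefore expect the closed form
\begin{equation*}
    \sup_{\varepsilon\in S_\delta}\sum_n (a_n - g_n c_n)^2 = \sum_n \big(|a_n|+|g_n|\delta\big)^2 \quad\in[0,\infty].
\end{equation*}

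The main obstacle is that the coordinatewise optimum cannot be substituted directly: the corner sequence with $|c_n|=\delta$ for all $n$ is not square-summable, so it does not correspond to any admissible $\varepsilon\in S_\delta$. To establish the lower bound rigorously I would use finitely supported truncations — for each $N$ take $c_n$ optimal for $n\le N$ and $c_n=0$ otherwise, which yields a genuine $\varepsilon\in S_\delta$. Non-negativity of all terms then gives $\sum_n(a_n-g_nc_n)^2\ge\sum_{n\le N}(|a_n|+|g_n|\delta)^2$, and letting $N\to\infty$ proves the $\ge$ direction, also covering the case where the right-hand side diverges. The reverse inequality is the per-term bound $(a_n-g_nc_n)^2\le(|a_n|+|g_n|\delta)^2$ summed over $n$.

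Finally I would read off the equivalence from this closed form. Since $(|a_n|+|g_n|\delta)^2\ge\delta^2 g_n^2$, finiteness of the supremum forces $\delta^2\sum_n g_n^2<\infty$, hence $g\in\ell^2$. Conversely, if $g\in\ell^2$ then $g_n\to0$, and compactness of $A$ gives $\sigma_n\to0$, so $\sigma_n g_n\to0$ and $C:=\sup_n(1-\sigma_n g_n)^2<\infty$; using $(|a_n|+|g_n|\delta)^2\le 2a_n^2+2\delta^2 g_n^2$ together with $\sum_n a_n^2\le C\sum_n\langle x,u_n\rangle^2\le C\|x\|^2<\infty$, both resulting series converge, so $\sum_n(|a_n|+|g_n|\delta)^2<\infty$. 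Adding back the finite constant $\|x_0\|^2$ shows the supremum is finite precisely when $g\in\ell^2$.
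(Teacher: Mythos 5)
Your proof is correct and follows essentially the same route as the paper's: for $g\notin\ell^2$ the paper likewise uses finitely supported, sign-matched perturbations $\tilde\varepsilon_k$ so that the cross terms are non-negative and $\sum_{n\le k}g_n^2\delta^2\to\infty$, and for $g\in\ell^2$ it bounds the coordinatewise maximum (attained at $w_n^{\max}=\pm\delta$) via Cauchy--Schwarz and Young, which is exactly your estimate $(|a_n|+|g_n|\delta)^2\le 2a_n^2+2\delta^2g_n^2$. Your write-up is somewhat more explicit about why the non-square-summable corner point must be approximated by truncations and about the closed form $\sum_n(|a_n|+|g_n|\delta)^2$ of the supremum, but the underlying argument is the same.
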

\begin{proof}
    If $g=(g_n)_{n\in\N} \not\in \ell^2$, take the sequence $\Tilde{\varepsilon}_k=\sum_{n=1}^k \delta \sign(-2(1-\sigma_n g_n) g_n\langle x,u_n\rangle)v_n$.
    and estimate 
    \begin{align*}
        \|R[g](Ax+\varepsilon_k)-x\|^2=&\|x_0\|^2+\sum_n  (1-\sigma_n g_n)^2\langle x, u_n\rangle^2 +g_n^2\langle \varepsilon_k,v_n\rangle^2 \\&-2(1-\sigma_n g_n)g_n\langle x, u_n\rangle\langle \varepsilon_k,v_n\rangle\\
        \geq& \sum_n g_n^2 \langle \varepsilon_k,v_n \rangle^2\rightarrow +\infty 
    \end{align*}
    for $k\rightarrow +\infty$. On the other hand if $(g_n)_{n\in\N} \in \ell^2$ then we can look at the functional $f: X\times \ell^\infty \rightarrow \R$ mapping
    \begin{align*}
        (x,w) \mapsto \sum_n  (1-\sigma_n g_n)^2\langle x, u_n\rangle^2-2(1-\sigma_n g_n) g_n \langle x, u_n\rangle w_n+g_n^2 w_n^2. 
    \end{align*}
    Finiteness of the functional can be guaranteed by a combination of Cauchy-Schwartz and Young's inequality. For a fixed $x\in X$ the maximum of this functional under the restriction $\|w\|_{\ell^\infty}\leq \delta$ is clearly obtained by $w_n^{max}=\sign(-2(1-\sigma_n g_n)g_n\langle x,u_n\rangle) \delta$. 
    Then 
    \begin{align*}
        \|R[g](Ax+\Tilde{\varepsilon}_k)-x\|^2 \rightarrow f(x,w^{max})+\|x_0\|^2
    \end{align*}
    for $k\rightarrow +\infty$. In particular
    \begin{align*}
        \sup_{S_\delta} \| R[g](Ax+\varepsilon)-x\|^2 =\max_{w\in \ell^\infty: \|w\|_{\ell^\infty} \leq \delta} f(x,w)+\|x_0\|^2.
    \end{align*}
\end{proof}
Due to the previous considerations we can solve
\begin{align}\label{eq: HAdRed_MB}
    \min_{g\in \ell^2} \mathbb{E}_{x\sim P_x} \max_{w\in \ell^\infty: \|w\|_{\ell^\infty} \leq \delta} f_g(x,w)+\|x_0\|^2=\min_{g\in \ell^2} \mathbb{E}_{x\sim P_x}f_g(x,w^{max}_x)+\|x_0\|^2.
\end{align}
instead of 
\begin{align}\label{eq: AdInf_MB}
    \min_{g} \mathbb{E}_{x\sim P_x}\sup_{S_\delta} \|R[g](Ax+\varepsilon)-x\|^2.
\end{align}
The subscripts $g$ and $x$ denote the dependencies of $f$ and $w^{max}$ on $g$ and $x$, respectively.
\begin{lemma}\label{lm: GRange_MB}
    A minimizer $m$ of problem \eqref{eq: HAdRed_MB} fulfills $m_n\in \left[0,\frac{1}{\sigma_n}\right]$.
\end{lemma}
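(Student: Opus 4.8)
The plan is to reduce the infinite-dimensional problem \eqref{eq: HAdRed_MB} to a countable family of one-dimensional minimizations that decouple completely, and then to confine each scalar minimizer to $[0,1/\sigma_n]$ by a pointwise-in-$x$ domination argument. The key structural fact I would rely on is that, once the inner maximum $w^{max}_x$ from the previous lemma is substituted, the objective becomes \emph{separable} in the coordinates $g_n$, so that membership in the box $\prod_n[0,1/\sigma_n]$ can be checked coordinate by coordinate.

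First I would insert $w^{max}_x$, whose $n$-th entry equals $\sign\bigl(-2(1-\sigma_n g_n)g_n\langle x,u_n\rangle\bigr)\delta$, into $f_g(x,\cdot)$. A short square-completion collapses the three terms of $f_g$ into a perfect square, giving
\begin{equation*}
f_g(x,w^{max}_x) = \sum_{n}\Bigl(|1-\sigma_n g_n|\,|\langle x,u_n\rangle| + |g_n|\,\delta\Bigr)^2 .
\end{equation*}
Since every summand is nonnegative, Tonelli's theorem lets me interchange the sum with $\expect_{x\sim P_x}$, so that (up to the $g$-independent constant $\|x_0\|^2$) the objective in \eqref{eq: HAdRed_MB} becomes $\sum_n F_n(g_n)$ with
\begin{equation*}
F_n(t) := \expect_{x\sim P_x}\Bigl[\bigl(|1-\sigma_n t|\,|\langle x,u_n\rangle| + |t|\,\delta\bigr)^2\Bigr].
\end{equation*}
As $F_n$ depends only on the $n$-th coordinate, a minimizer $m$ must minimize each $F_n$ individually, and it suffices to show that every minimizer of the scalar functional $F_n$ lies in $[0,1/\sigma_n]$.

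For the scalar analysis I would fix $n$, abbreviate $s=\sigma_n>0$ and $a = |\langle x,u_n\rangle|\ge 0$, and study $\phi(t)=|1-st|\,a + |t|\,\delta$ pointwise in $x$. In the regime $t>1/s$ one has $\phi(t)=(st-1)a + t\delta > \delta/s = \phi(1/s)$, because $(st-1)a\ge 0$ while $t\delta > \delta/s$; in the regime $t<0$ one has $\phi(t)=(1-st)a + (-t)\delta > a = \phi(0)$, because $1-st>1$ forces $(1-st)a\ge a$ and $(-t)\delta>0$. Since $\phi\ge 0$ throughout, squaring preserves these strict inequalities for every $x$, and taking expectations — with the comparison values $F_n(0)=\Pi_n$ and $F_n(1/\sigma_n)=\delta^2/\sigma_n^2$ both finite — yields $F_n(t)>F_n(1/\sigma_n)$ for $t>1/\sigma_n$ and $F_n(t)>F_n(0)$ for $t<0$. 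Consequently, if a coordinate $m_{n_0}$ of a minimizer fell outside $[0,1/\sigma_{n_0}]$, replacing it by its projection onto that interval (leaving all other coordinates, and hence membership in $\ell^2$, untouched) would strictly decrease the objective, contradicting minimality. This establishes $m_n\in[0,1/\sigma_n]$ for all $n$.

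The routine parts are the square-completion that produces the separable form and the elementary two-case comparison of $\phi$. The only points that need genuine care are the bookkeeping with the absolute values (so that the sign of $w^{max}_x$ is correctly accounted for in the cross term), and the verification that the strict pointwise domination survives the passage to expectations, which hinges on the finiteness of $F_n(0)=\Pi_n$ guaranteed by $\expect_{x\sim P_x}[\|x\|^2]<\infty$.
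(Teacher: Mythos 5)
Your proof is correct and follows essentially the same route as the paper's: after inserting $w^{max}_x$ the objective separates into per-coordinate summands, and an out-of-range coordinate $m_n$ is strictly improved by replacing it with a point of $[0,1/\sigma_n]$ (you project to $0$ resp. $1/\sigma_n$, the paper uses $-m_n$ resp. $1/\sigma_n$, which is immaterial). Your square-completion $\bigl(|1-\sigma_n t|\,|\langle x,u_n\rangle|+|t|\delta\bigr)^2$ and the pointwise-in-$x$ domination also absorb the degenerate case $\expect_{x\sim P_x}|\langle x,u_n\rangle|=0$ that the paper treats separately, so nothing is missing.
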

\begin{proof}
We start by observing that $\mathbb{E}_{x\sim P_x}\langle u, x_n\rangle^2=0$ if and only if 
$\mathbb{E}_{x\sim P_x}|\langle u, x_n\rangle|=0$. Using representation \eqref{eq: Loss_MB}, the $n$-th summand of $\mathbb{E}_{x\sim P_x} f_{g}(x,w^{max}_x)$ is given in this case by $g_n^2\delta^2$ and $m_n$ has to be $0$.\\ 
Let us now consider the case $\mathbb{E}_{x\sim P_x}\langle u, x_n\rangle^2\not=0\not=\mathbb{E}_{x\sim P_x}|\langle u, x_n\rangle|$.
If $m_n<0$ for some $n$, we can set $\Tilde{m}_n=-m_n$
to obtain the contradiction $$ \mathbb{E}_{x\sim P_x}f_{m}(x,w^{max}_x)+\|x_0\|^2> \mathbb{E}_{x\sim P_x} f_{\Tilde{m}}(x,w^{max}_x)+\|x_0\|^2.$$
To see this compare the $n$-th summand of $\mathbb{E}_{x\sim P_x}f_{m}(x,w^{max}_x)$ and $\mathbb{E}_{x\sim P_x} f_{\Tilde{m}}(x,w^{max}_x)$. 
Since $\sigma_n>0$,  $| 1-\sigma_n m_n|> | 1-\sigma_n \Tilde{m}_n|$, clearly 
\begin{align*}
    &(1-\sigma_n m_n)^2 \mathbb{E}_{x\sim P_x}\langle x, u_n\rangle^2 +2\delta |(1-\sigma_n m_n)| |m_n| \mathbb{E}_{x\sim P_x} | \langle u,x_n \rangle|  + (m_n)^2 \delta^2 \\
    &>(1-\sigma_n \Tilde{m}_n)^2 \mathbb{E}_{x\sim P_x}\langle x, u_n\rangle^2 +2\delta |(1-\sigma_n \Tilde{m}_n)| |\Tilde{m}_n| \mathbb{E}_{x\sim P_x}  |\langle u,x_n \rangle|  + (\Tilde{m}_n)^2 \delta^2.
\end{align*}
On the other hand if $m_n>\frac{1}{\sigma_n}$ we can set $\Tilde{m}_n=\frac{1}{\sigma_n}$, then again observe
\begin{align*}
    &(1-\sigma_n m_n)^2 \mathbb{E}_{x\sim P_x}\langle x, u_n\rangle^2
    +2\delta |(1-\sigma_n m_n)| |m_n| \mathbb{E}_{x\sim P_x} | \langle u,x_n \rangle| + (m_n)^2 \delta^2\\
    &> (\Tilde{m}_n)^2 \delta^2\\
    &=\underbrace{(1-\sigma_n \Tilde{m}_n)^2}_{=0} \mathbb{E}_{x\sim P_x}\langle x, u_n\rangle^2
     +2\delta \underbrace{|(1-\sigma_n \Tilde{m}_n)|}_{=0} |\Tilde{m}_n| \mathbb{E}_{x\sim P_x}  |\langle u,x_n \rangle| + (\Tilde{m}_n)^2 \delta^2
\end{align*}
and $\Tilde{m}_n$ is a strict improvement over $m_n$.
\end{proof}

With those insights we are now able to calculate the minimizers precisely. The proof of the following theorem can be found in Appendix D.

\begin{theorem}\label{thm: AdInftyVal}
A minimizer of problem \eqref{eq: HAdRed_MB} is given by $g^{adv_\infty}\in \ell^2 $ with the choice 
\begin{align*}
    g_n^{adv_\infty}=0 \quad\text{if }\  \frac{\mathbb{E}_{x\sim P_x} \langle x, u_n \rangle^2}{\mathbb{E}_{x\sim P_x} |\langle x, u_n \rangle|}\leq\frac{\delta}{\sigma_n}\ \text{ or }\ \mathbb{E}_{x\sim P_x} |\langle u,x_n\rangle|=0,
\end{align*}
and else
\begin{align*}
    g_n^{adv_\infty}=\begin{cases} 
    \frac{1}{\sigma_n-\delta \frac{\sigma_n \mathbb{E}_{x\sim P_x} |\langle x,u_n\rangle|-\delta}{\sigma_n \mathbb{E}_{x\sim P_x} \langle x,u_n\rangle^2-\delta \mathbb{E}_{x\sim P_x} |\langle x,u_n\rangle|}}& \text{if } \mathbb{E}_{x\sim P_x} |\langle x,u_n\rangle|< \frac{\delta}{\sigma_n}< \frac{\mathbb{E}_{x\sim P_x} \langle x, u_n \rangle^2}{\mathbb{E}_{x\sim P_x} |\langle x, u_n \rangle|},\\
    \frac{1}{\sigma_n} &\text{if } \frac{\delta}{\sigma_n}\leq\mathbb{E}_{x\sim P_x} |\langle x,u_n\rangle|.
    \end{cases}
\end{align*}
\end{theorem}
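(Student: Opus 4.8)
The plan is to exploit the fact that, once the inner maximisation over the adversary $w$ has been carried out, the objective in \eqref{eq: HAdRed_MB} decouples completely across the index $n$, reducing the problem to a family of independent one-dimensional minimisations. By the computation in the lemma immediately preceding Lemma \ref{lm: GRange_MB}, for fixed $g$ the maximiser over $\{w:\|w\|_{\ell^\infty}\le\delta\}$ is attained coordinatewise at $w_n^{max}=\delta\,\sign(-2(1-\sigma_n g_n)g_n\langle x,u_n\rangle)$. Substituting this into the expression \eqref{eq: Loss_MB} for $f_g$ and taking the expectation over $P_x$ gives
\begin{equation*}
\expect_{x\sim P_x}\max_{\|w\|_{\ell^\infty}\le\delta} f_g(x,w) = \sum_{n\in\N} F_n(g_n),\qquad F_n(t) := (1-\sigma_n t)^2\Pi_n + 2\delta\,|1-\sigma_n t|\,|t|\,q_n + t^2\delta^2,
\end{equation*}
where $\Pi_n=\expect_{x\sim P_x}\langle x,u_n\rangle^2$ and $q_n=\expect_{x\sim P_x}|\langle x,u_n\rangle|$. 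First I would invoke Lemma \ref{lm: GRange_MB}, which confines any minimiser to $g_n\in[0,1/\sigma_n]$; on this interval $1-\sigma_n t\ge0$ and $t\ge0$, so the absolute values in $F_n$ disappear and $F_n$ becomes a genuine quadratic in $t$.

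Next I would expand this quadratic as
\begin{equation*}
F_n(t) = \big(\sigma_n^2\Pi_n - 2\delta\sigma_n q_n + \delta^2\big)\,t^2 - 2\big(\sigma_n\Pi_n - \delta q_n\big)\,t + \Pi_n,
\end{equation*}
and record that its leading coefficient is nonnegative: by Jensen's inequality $q_n^2\le\Pi_n$, whence $\sigma_n^2\Pi_n-2\delta\sigma_n q_n+\delta^2\ge\sigma_n^2 q_n^2-2\delta\sigma_n q_n+\delta^2=(\sigma_n q_n-\delta)^2\ge0$. Thus $F_n$ is convex, and its minimiser over $[0,1/\sigma_n]$ is the projection onto that interval of the unconstrained vertex $t_n^*=(\sigma_n\Pi_n-\delta q_n)/(\sigma_n^2\Pi_n-2\delta\sigma_n q_n+\delta^2)$. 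The three cases in the statement then correspond exactly to the sign of $t_n^*$ and its comparison with $1/\sigma_n$: a short computation shows $t_n^*\le0\Leftrightarrow\Pi_n/q_n\le\delta/\sigma_n$, giving $g_n=0$ (this regime also absorbs the degenerate case $q_n=0$, in which $\Pi_n=0$ and $F_n(t)=\delta^2t^2$); one has $t_n^*\ge1/\sigma_n\Leftrightarrow q_n\ge\delta/\sigma_n$, giving $g_n=1/\sigma_n$; and in the intermediate range $q_n<\delta/\sigma_n<\Pi_n/q_n$ the minimiser is the interior vertex $t_n^*$ (note that $\sigma_n q_n<\delta$ here makes the leading coefficient strictly positive, so the vertex is well defined). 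To match the displayed closed form I would simply verify the algebraic identity $\sigma_n-\delta(\sigma_n q_n-\delta)/(\sigma_n\Pi_n-\delta q_n)=(\sigma_n^2\Pi_n-2\delta\sigma_n q_n+\delta^2)/(\sigma_n\Pi_n-\delta q_n)$, clearing the common factor $\sigma_n\Pi_n-\delta q_n>0$, which shows that the reciprocal of the left-hand side equals $t_n^*$.

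The one point that requires genuine care — and the \emph{main obstacle} — is to justify that solving the problem coordinatewise actually produces the minimiser of the $\ell^2$-constrained problem \eqref{eq: HAdRed_MB}; this amounts to checking that the sequence $g^{adv_\infty}$ constructed above lies in $\ell^2$, so that it is feasible and the separable lower bound is attained inside the constraint set. For this I would use that, on $[0,1/\sigma_n]$, all three summands of $F_n$ are nonnegative, so $F_n(g_n^{adv_\infty})\ge\delta^2(g_n^{adv_\infty})^2$, while comparison with the feasible point $g=0$ yields $\sum_n F_n(g_n^{adv_\infty})\le\sum_n F_n(0)=\sum_n\Pi_n\le\expect_{x\sim P_x}\|x\|^2<\infty$ by assumption. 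Combining the two bounds gives $\delta^2\sum_n(g_n^{adv_\infty})^2<\infty$, so $g^{adv_\infty}\in\ell^2$.

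Since the objective is the sum of the independent terms $F_n$ and $g^{adv_\infty}$ minimises each of them over the range guaranteed by Lemma \ref{lm: GRange_MB}, the feasible point $g^{adv_\infty}$ minimises the sum over all of $\ell^2$, completing the argument. As a consistency check one observes that the case $g_n=1/\sigma_n$ arises only for the finitely many indices with $\sigma_n q_n\ge\delta$, which is compatible with $\ell^2$-membership because $\sigma_n q_n\le\sigma_n\sqrt{\Pi_n}\to0$.
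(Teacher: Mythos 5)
Your proof is correct and follows the same overall strategy as the paper: reduce to independent one--dimensional problems after inserting $w^{max}$, invoke Lemma \ref{lm: GRange_MB} to confine the search to $[0,1/\sigma_n]$, and identify the same critical point. Where you differ is in how the three cases are settled. The paper analyses the sign of the denominator of the critical point by factoring it as $\sigma_n - A\cdot B$ and using Cauchy--Schwarz, then separately produces ad hoc lower bounds $S_n \geq \delta^2/\sigma_n^2$ and $S_n \geq \Pi_n$ to certify the boundary minimizers, plus a second--derivative check for the interior case; you instead observe that on $[0,1/\sigma_n]$ the summand is a convex quadratic with leading coefficient $\sigma_n^2\Pi_n - 2\delta\sigma_n q_n + \delta^2 \geq (\sigma_n q_n - \delta)^2 \geq 0$, so the constrained minimizer is simply the projection of the vertex onto the interval, and the three regimes fall out of comparing the vertex with $0$ and $1/\sigma_n$. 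This is a cleaner and more unified treatment of the case analysis. You also add a step the paper's proof leaves implicit: verifying that the constructed sequence actually lies in $\ell^2$ (via $F_n(t)\geq\delta^2 t^2$ together with the comparison $\sum_n F_n(g_n^{adv_\infty})\leq\sum_n\Pi_n<\infty$), which is needed for feasibility in \eqref{eq: HAdRed_MB} and which the paper only establishes indirectly in the subsequent convergence argument. The only point glossed over in your equivalences $t_n^*\leq 0 \Leftrightarrow \Pi_n/q_n\leq\delta/\sigma_n$ and $t_n^*\geq 1/\sigma_n\Leftrightarrow q_n\geq\delta/\sigma_n$ is the degenerate situation $q_n^2=\Pi_n$ and $\sigma_n q_n=\delta$, where the vertex is of the form $0/0$ and $F_n$ is constant on the interval; the paper relegates exactly this case to a remark, so this is not a substantive gap.
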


\begin{remark}
In in the case that $\left(\mathbb{E}_{x\sim P_x} |\langle x, u_n\rangle|\right)^2=\mathbb{E}_{x\sim P_x} \langle x, u_n\rangle^2$ and $\frac{\delta}{\sigma_n}=\mathbb{E}_{x\sim P_x} |\langle x, u_n\rangle|$ the term $S(g_n)=\mathbb{E}_{x\sim P_x} \langle x,u_n\rangle^2$ is constant and every choice of $g_n\in\left[0,\frac{1}{\sigma_n}\right]$ is a minimizer. We simply set $g_n=0$ in the definition of $g_n^{adv_\infty}$. In any other case the choice for $g_n^{adv_\infty}$ is unique.
\end{remark}
We denote the regularization obtain by the solving problem \eqref{eq: AdInf_MB} with an $\varepsilon$ budget of $\delta$ by $R^{adv_\infty}_\delta\coloneqq R[g^{adv_\infty}]$. To study the convergence of this regularizer we introduce the set of distributions with a white noise level smaller then $\delta$
\begin{align*}
    \mathcal{S}_\delta \coloneqq \left\{P_\varepsilon: \mathbb{E}_{\varepsilon \sim P_\varepsilon} \langle \varepsilon, v_n\rangle^2\leq \delta^2, \forall n \right\}.
\end{align*}
\begin{theorem}
Let $P_x$ be the training data distribution satisfying $ \mathbb{E}_{x\sim P_x}\left[\|x\|^2\right]<+\infty$.
Then for any fixed data distribution $\Tilde{P}_x$ with $\mathbb{E}_{ x\sim \Tilde{P}_x} \left[ \|x\|^2\right] <+\infty$ 
    we have
\begin{align*}
    \lim_{\delta \rightarrow 0} \mathbb{E}_{x\sim\Tilde{P}_x}\sup_{\varepsilon\in S_\delta} \| R^{adv_\infty}_\delta(Ax +\varepsilon)-A^\dagger Ax\|^2=0 ,
\end{align*}
as well as
\begin{align*}
    \lim_{\delta \rightarrow 0}  \sup_{\Tilde{P}_\varepsilon \in \mathcal{S}_\delta}\mathbb{E}_{x\sim\tilde{P}_x,\varepsilon\sim P_\varepsilon}\| R^{adv_\infty}_\delta(Ax +\varepsilon)-A^\dagger Ax\|^2=0.
\end{align*}
\end{theorem}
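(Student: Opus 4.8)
The plan is to reduce both limits to a single dominated-convergence argument for the diagonal series expansion of the error, the essential input being that $g^{adv_\infty}$ is a \emph{coordinatewise} minimizer of the training objective. I would first fix $\tilde P_x$ and abbreviate $\tilde\Pi_n=\mathbb{E}_{x\sim\tilde P_x}\langle x,u_n\rangle^2$, $\tilde M_n=\mathbb{E}_{x\sim\tilde P_x}|\langle x,u_n\rangle|$, and likewise $\Pi_n,M_n$ for the training distribution $P_x$. By Lemma \ref{lm: GRange_MB} we have $g^{adv_\infty}_n\in[0,1/\sigma_n]$, so $1-\sigma_n g^{adv_\infty}_n\in[0,1]$, and the maximizing noise $w^{max}_x$ computed before \eqref{eq: HAdRed_MB} yields the exact identity
\[
\sup_{\varepsilon\in S_\delta}\bigl\|R[g^{adv_\infty}](Ax+\varepsilon)-A^\dagger Ax\bigr\|^2=\sum_n\bigl((1-\sigma_n g^{adv_\infty}_n)\,|\langle x,u_n\rangle|+g^{adv_\infty}_n\delta\bigr)^2,
\]
where the constant $\|x_0\|^2$ drops out because we measure against $A^\dagger Ax$. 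Writing $\tilde S_n$ for the $\mathbb{E}_{x\sim\tilde P_x}$-average of the $n$-th summand and using $(a+b)^2\le 2a^2+2b^2$, the first limit is exactly $\sum_n\tilde S_n\to0$, with $\tilde S_n\le 2(1-\sigma_n g^{adv_\infty}_n)^2\tilde\Pi_n+2(g^{adv_\infty}_n\delta)^2$.

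The crux is a pair of a priori bounds obtained from optimality rather than from the explicit formula in Theorem \ref{thm: AdInftyVal}. After taking $\max_w$ and $\mathbb{E}_{x\sim P_x}$, the objective \eqref{eq: HAdRed_MB} decouples into $\sum_n S_n(g_n)$ with $S_n(g)=(1-\sigma_n g)^2\Pi_n+2(1-\sigma_n g)g\delta M_n+(g\delta)^2$, so each $g^{adv_\infty}_n$ minimizes $S_n$ on $[0,1/\sigma_n]$. Since every summand of $S_n$ is nonnegative, comparing the minimal value with $S_n(0)=\Pi_n$ and with $S_n(1/\sigma_n)=\delta^2/\sigma_n^2$ gives
\[
(g^{adv_\infty}_n\delta)^2\le S_n(g^{adv_\infty}_n)\le\Pi_n\qquad\text{and}\qquad(1-\sigma_n g^{adv_\infty}_n)^2\Pi_n\le S_n(g^{adv_\infty}_n)\le\frac{\delta^2}{\sigma_n^2}.
\]
The first is a $\delta$-independent summable envelope (recall $\sum_n\Pi_n\le\mathbb{E}_{x\sim P_x}\|x\|^2<\infty$); the second, with $(g^{adv_\infty}_n\delta)^2\le\delta^2/\sigma_n^2$, gives the pointwise decay $(1-\sigma_n g^{adv_\infty}_n)^2\le\delta^2/(\sigma_n^2\Pi_n)\to0$ and $(g^{adv_\infty}_n\delta)^2\to0$ as $\delta\to0$. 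Here I use the non-degeneracy $\Pi_n>0$ for all $n$ (as hypothesised in Theorem \ref{mb_thm:convmse}); it is genuinely needed, since if $P_x$ ignored a direction $u_n$ then $g^{adv_\infty}_n=0$ and the error on any $\tilde P_x$ charging that direction could not vanish. Consequently $\tilde S_n\le 2\tilde\Pi_n+2\Pi_n$ is summable and $\delta$-independent while $\tilde S_n\to0$ for each fixed $n$, so dominated convergence for series gives $\sum_n\tilde S_n\to0$, which is the first limit.

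For the second limit I would reduce it to the first. For any $P_\varepsilon\in\mathcal{S}_\delta$, independence of $x$ and $\varepsilon$ together with \eqref{eq: Loss_MB} yields
\[
\mathbb{E}_{x\sim\tilde P_x,\varepsilon\sim P_\varepsilon}\bigl\|R[g^{adv_\infty}](Ax+\varepsilon)-A^\dagger Ax\bigr\|^2=\sum_n\Bigl[(1-\sigma_n g^{adv_\infty}_n)^2\tilde\Pi_n-2(1-\sigma_n g^{adv_\infty}_n)g^{adv_\infty}_n\mu_n\bar\varepsilon_n+(g^{adv_\infty}_n)^2e_n\Bigr],
\]
with $\mu_n=\mathbb{E}_{\tilde P_x}\langle x,u_n\rangle$, $\bar\varepsilon_n=\mathbb{E}_{P_\varepsilon}\langle\varepsilon,v_n\rangle$ and $e_n=\mathbb{E}_{P_\varepsilon}\langle\varepsilon,v_n\rangle^2$. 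Because $|\mu_n|\le\tilde M_n$, $|\bar\varepsilon_n|\le\sqrt{e_n}\le\delta$ and $e_n\le\delta^2$, each summand is at most $\tilde S_n$, so the left-hand side is bounded by $\sum_n\tilde S_n=\mathbb{E}_{x\sim\tilde P_x}\sup_{\varepsilon\in S_\delta}\|{\cdots}\|^2$ uniformly over $\mathcal{S}_\delta$; taking the supremum over $\mathcal{S}_\delta$ and invoking the first limit concludes.

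The main obstacle is precisely the uniform-in-$\delta$ domination $(g^{adv_\infty}_n\delta)^2\le\Pi_n$. The naive estimate $g^{adv_\infty}_n\le1/\sigma_n$ only yields $(g^{adv_\infty}_n\delta)^2\le\delta^2/\sigma_n^2$, and $\sum_n\delta^2/\sigma_n^2$ need not be finite for ill-posed $A$, so the noise-amplification term cannot be controlled termwise; it is the coordinatewise optimality (value at the minimizer $\le$ value at $g=0$) that supplies the summable envelope and legitimises interchanging the limit with the summation. The remaining steps are routine bookkeeping.
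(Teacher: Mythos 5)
Your proof is correct and arrives at both limits, but by a genuinely different mechanism than the paper's. The paper controls the noise-amplification term \emph{globally}: it builds a competitor $g(1/\sqrt{\delta})$ (a truncated pseudo-inverse with $\|g(1/\sqrt{\delta})\|_{\ell^2}\le 1/\sqrt{\delta}$), compares the optimal value of \eqref{eq: HAdRed_MB} against it to conclude that the entire training objective, and in particular $\delta^2\sum_n (g_n^{adv_\infty})^2$, tends to zero, and then kills the bias term on a general $\tilde{P}_x$ by invoking the explicit formula of Theorem \ref{thm: AdInftyVal} (namely $g_n^{adv_\infty}=1/\sigma_n$ once $\delta\le\sigma_n\mathbb{E}_{x\sim P_x}|\langle x,u_n\rangle|$). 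You instead use only coordinatewise optimality on $[0,1/\sigma_n]$ (Lemma \ref{lm: GRange_MB} together with the decoupling of \eqref{eq: HAdRed_MB}): comparing $S_n(g_n^{adv_\infty})$ with $S_n(0)=\Pi_n$ and with $S_n(1/\sigma_n)=\delta^2/\sigma_n^2$ gives the $\delta$-uniform summable envelope $(g_n^{adv_\infty}\delta)^2\le\Pi_n$ and the pointwise decay $(1-\sigma_n g_n^{adv_\infty})^2\le\delta^2/(\sigma_n^2\Pi_n)$, after which dominated convergence for series closes the argument. This buys two things: the closed form of $g^{adv_\infty}$ is never needed, and your moment-by-moment treatment of the second limit (using $|\mathbb{E}\langle\varepsilon,v_n\rangle|\le\sqrt{\mathbb{E}\langle\varepsilon,v_n\rangle^2}\le\delta$) is more careful than the paper's, since distributions in $\mathcal{S}_\delta$ need not be supported on $S_\delta$ and the reduction is therefore not a pointwise domination. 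The one caveat cuts both ways: your argument, like the paper's, requires $\Pi_n>0$ for every direction charged by $\tilde{P}_x$; you are right that this is not a technicality (if $P_x$ ignores a direction then $g_n^{adv_\infty}=0$ there and the bias in that direction cannot vanish), and since this hypothesis is absent from the theorem as stated, making it explicit is an improvement rather than a defect of your proof.
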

\begin{proof}
    For $r>0$ we define $g(r)$ as the sequence 
    \begin{align*}
        g(r)_i\coloneqq \begin{cases}
            \frac{1}{\sigma_i} &\text{if }\sum^i_{1=n} \frac{1}{\sigma_n^2} \leq r^2 \\
            0 &\text{ else}
        \end{cases}
    \end{align*}
    such that by definition $\|g(r)\|\leq r$.
    Let now $P_x$ be the trainings distribution, then
    \begin{align*}
    &\sum_n (1-\sigma_n g_n^{adv_\infty})^2\mathbb{E}_{x\sim P_x} \langle x, u_n\rangle^2 \\
    &+ 2 \delta (1-\sigma_n g_n^{adv_\infty})g_n^{adv_\infty}\mathbb{E}_{x\sim P_x} |\langle x, u_n\rangle|+\delta^2 (g_n^{adv_\infty})^2\\
        =&\min_{g} \mathbb{E}_{x\sim P_x}\sup_{S_\delta}\| R[g](A x+\varepsilon)-A^\dagger A x\|^2\\
        \leq& \min_{g}\mathbb{E}_{x\sim P_x}2\left(\| R[g](A x)-A^\dagger A x\|^2 + \sup_{S_\delta} \sum_n g_n^2 \langle\varepsilon, v_n\rangle^2\right)\\
        =& \min_{g}  \mathbb{E}_{x\sim P_x} 2\left(\| R[g](A x)-A^\dagger A x\|^2 +\delta^2 \|g\|_{\ell^2}^2\right)\\
        \leq& \mathbb{E}_{x\sim P_x} 2\left(\| R[g(1/\sqrt{\delta})](A x)-A^\dagger A x\|^2 +\delta\right) \rightarrow 0
    \end{align*}
    as $\delta \rightarrow 0$. This implies that in particular $\sum_n (g_n^{adv_\infty})^2 \delta^2 \rightarrow 0$. Then for a general distribution $\tilde{P}_x$
    \begin{align*}
    &\mathbb{E}_{x\sim\tilde{P}_x} \sup_{S_\delta}\| R_{g^\infty(\delta)}(A x+\varepsilon)-A^\dagger Ax\|^2\\
    \leq& 2 \sum_{n} (1-\sigma_n g_n^{adv_\infty})^2 \mathbb{E}_{x\sim \tilde{P}_x} \langle x,u_n\rangle^2 +(g_n^{adv_\infty})^2 \delta^2\\
    \leq&2\sum_{\delta \geq \mathbb{E}_{x\sim\tilde{P}_x} |\langle x,u_n\rangle|\sigma_n } \mathbb{E}_{x\sim \tilde{P}_x}\langle x, u_n\rangle^2+2\sum_n (g_n^{adv_\infty})^2 \delta^2
    \end{align*}
    The first term in the last line clearly vanishes as $\delta \rightarrow 0$ since $\sum_n \mathbb{E}_{x\sim\tilde{P}_x} \langle x,u_n\rangle ^2$ is finite and the second term vanishes due to previous considerations.
\end{proof}
\section{Sparse Frame Architectures}

In order to obtain further insights into more realistic architectures used in deep learning, we construct an example of a network-based regularizer that resembles sparsity in frame systems, such as analysis-based formulations of wavelet regularization or the heavily used total variation regularization. In order to learn a regularizer we follow the setup of adversarial regularizers proposed in \cite{lunz2018adversarial_MB} with a data set of random ellipses, the forward operator $A$ being the identity, which is augmented by additive Gaussian noise. The adversarial examples are thus noisy images as illustrated in \ref{fig:TVdata_MB}.

\begin{figure}
\center\includegraphics[width = 11cm]{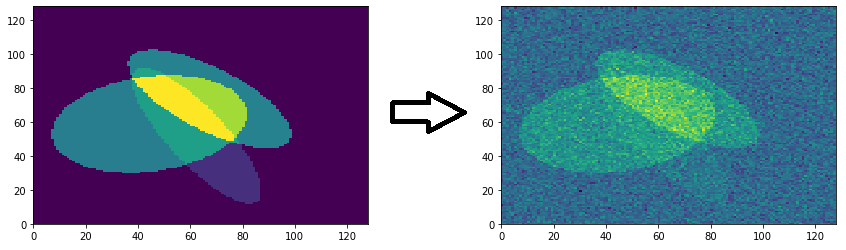}
\caption{Illustration of typical data (clean images on the left) and adversarial data (noisy images on the right). \label{fig:TVdata_MB}}
\end{figure}

\subsection{Sparse CNN Architectures}

In order to mimic sparse multiresolution analysis such as in wavelets, we use three convolutional kernels ${\mathbf k}=(k^1,k^2,k^3 )$ of different sizes. 
As a regularizer we choose
\begin{equation}
R(x,{\mathbf k} ) = \Vert {\mathbf k} \ast x \Vert_{1,1} = \sum_{i=1}^3  \Vert k^i \ast x \Vert_{1}.
\end{equation}
The three kernels are of size $6 \times 6$, $16 \times 16$, $36 \times 36$, and stride $2, 4$ and $8$, respectively, i.e. they represent different resolutions from fine to coarse.\\

\begin{figure}
\center\includegraphics[width = 10.5cm]{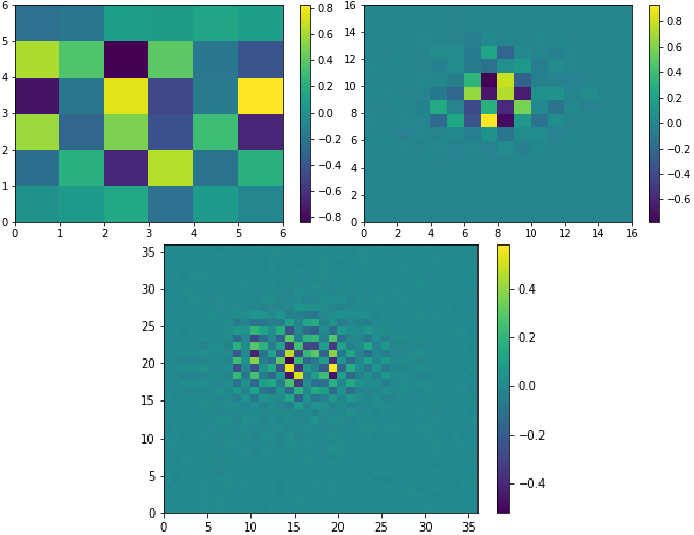}
\caption{Learned kernels $k^1,k^2,k^3$ in a multiresolution CNN architecture.. \label{fig:CNNkernelsrand_MB}}
\end{figure}

The results after training as an adversarial regularizer (using a random initialization) are shown in Figure \ref{fig:CNNkernelsrand_MB}. Surprisingly, all kernels rather have an effective scale of $6x6$ after removing  approximately vanishing entries. This indicates that a local CNN architecture is more suitable than multiple resolutions. Let us also mention that resulting kernels we obtained depend a lot on the initialization used for training, e.g. they look different when using only constant kernels initially. However, the main effect of all kernels having local support appears to be consistent also with other initializations

\subsection{Do local CNNs result in  Total Variation ?}

In order to further understand the behavior of local kernels in convolutional neural networks, we investigate a network architecture consisting of four convolutional kernels ${\mathbf k}=(k^1,k^2,k^3,k^4)$ of size $2\times 2$ and define the regularizer as 
\begin{equation}
R(x,{\mathbf k} ) = \Vert {\mathbf k} \ast x \Vert_{2,1} .
\end{equation}
This regularizer architecture resembles isotropic total variation in the sense that first the Euclidean norm of the four convolutions with the image at each point is computed (similar to the Euclidean norm of derivatives) and then summed up over all pixels. 

\begin{figure}
\center\includegraphics[width =10.0cm]{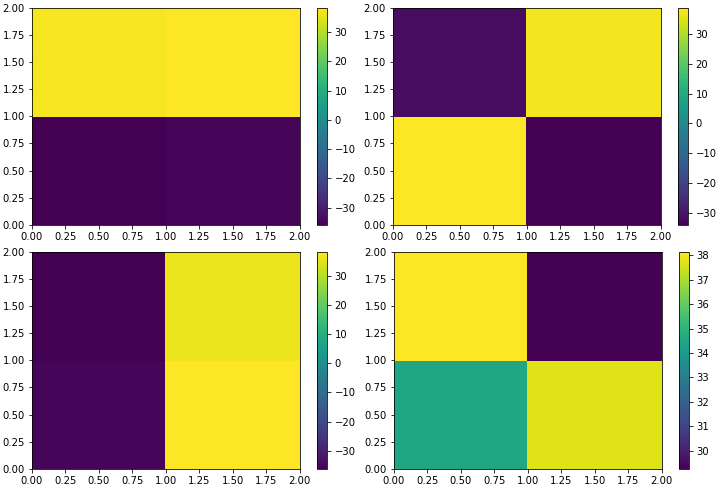}
\caption{Learned kernels $k^1,k^2,k^3,k^4$ in a local CNN architecture. \label{fig:CNNkernelslearned_MB}}
\end{figure}

Figure \ref{fig:CNNkernelslearned_MB} displays the four kernels obtained after training. Indeed, we see that the regularizer resembles a total variation type functional. The two kernels on the left can be interpreted exactly as realizations of derivatives along the coordinate axis. The kernels on the right can also be interpreted very well:  The lower one is almost constant (note the color bar) and hence penalizes the mean value. The top one resembles the approximation of a mixed second derivative, which can be seen easily from a Taylor expansion. The comparison of absolute values in all kernels, which are of similar range, implies that indeed the mean value part is effectively weighted stronger than the first derivatives (effectively divided by the number of pixels in one direction) and much stronger than the mixed second derivative (effectively divided by the total number of pixels) .

\section*{Acknowledgement}
MB, SK and LW acknowledge support from DESY (Hamburg, Germany), a member of the Helmholtz Association HGF. MB and SK acknowledge support from the German Research Foundation, project BU 2327/19-1. MB and LW acknowledge support from the German Research Foundation, project BU 2327/20-1. GK acknowledges support from the German Research Foundation, project KU 1446/32-1.

\printbibliography

\clearpage
\section*{Appendix A}
\addcontentsline{toc}{section}{Appendix A}
To prove Theorem \ref{mb_thm:convratedecayrates}, we are mainly interested in the decay of  $\sum_{n = N+1}^\infty n^{-a}$ and the growth of 
    $\sum_{n = 1}^N n^{b}$ as $N$ grows. We summarize the estimates we need in the following lemma.
    
    \begin{lemma}\label{mb_lem:hilfslemmadecay}
    Let $N > 0$.
        For $a > 1$, it holds that
        \begin{equation}\label{mb_eq:smalltail}
            \sum_{n = N+1}^\infty n^{-a} \leq \frac{a}{a-1}(N+1)^{1-a}.
        \end{equation}
        For $0 \geq b > -1$, {  we have }
        \addtocounter{equation}{1}
        \begin{equation}\label{mb_eq:finitesumsmalltail}\tag{\theequation a}
            \sum_{n = 1}^N n^{b} \leq \frac{1}{1+b} N^{1+b},
        \end{equation}
        and, for $b > 0$,
        \begin{equation}\label{mb_eq:finitesumlargetail}\tag{\theequation b}
             \sum_{n = 1}^N n^{b} \leq N^{1+b}.
        \end{equation}
     \end{lemma}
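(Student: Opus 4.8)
The plan is to treat the three estimates separately, each by comparing the relevant sum to an integral of a monotone power function, and adjusting for a boundary term only where the claimed constant demands it.

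For the tail estimate \eqref{mb_eq:smalltail}, I would exploit that $t \mapsto t^{-a}$ is decreasing. First I would peel off the leading term, writing $\sum_{n=N+1}^\infty n^{-a} = (N+1)^{-a} + \sum_{n=N+2}^\infty n^{-a}$, and bound the remaining tail by the integral $\int_{N+1}^\infty t^{-a}\,dt = \frac{(N+1)^{1-a}}{a-1}$, which is valid because each summand $n^{-a}$ is dominated by $\int_{n-1}^n t^{-a}\,dt$ for a decreasing integrand. The leading term satisfies $(N+1)^{-a} \leq (N+1)^{1-a}$ since $(N+1)^{-1} \leq 1$. Adding the two contributions produces the factor $1 + \frac{1}{a-1} = \frac{a}{a-1}$ in front of $(N+1)^{1-a}$, which is exactly the claimed bound.

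For \eqref{mb_eq:finitesumsmalltail}, the exponent $b \in (-1,0]$ again makes $t \mapsto t^b$ decreasing, so each summand obeys $n^b \leq \int_{n-1}^n t^b\,dt$; summing over $n = 1,\ldots,N$ gives $\sum_{n=1}^N n^b \leq \int_0^N t^b\,dt = \frac{1}{1+b}\,N^{1+b}$, where the integral is finite precisely because $b > -1$. For \eqref{mb_eq:finitesumlargetail} no integral is needed: since $b > 0$ the map $n \mapsto n^b$ is increasing, so $n^b \leq N^b$ for every $n \leq N$, and summing the $N$ terms yields the crude but sufficient bound $N \cdot N^b = N^{1+b}$.

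I expect no genuine obstacle, as all three are elementary monotonicity/integral comparisons. The only point requiring slight care is the first estimate, where producing the exact constant $\frac{a}{a-1}$ together with the shifted base $(N+1)^{1-a}$ forces the split into a leading term plus an integral tail from $N+1$, rather than a single integral from $N$; the naive bound $\int_N^\infty t^{-a}\,dt = \frac{N^{1-a}}{a-1}$ would capture the right order but not the stated clean form.
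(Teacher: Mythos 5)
Your proof is correct and follows essentially the same route as the paper: the tail bound \eqref{mb_eq:smalltail} is obtained by the identical split into the leading term $(N+1)^{-a}$ plus an integral from $N+1$, and \eqref{mb_eq:finitesumlargetail} by the same crude bound $n^b \leq N^b$. The only cosmetic difference is in \eqref{mb_eq:finitesumsmalltail}, where you absorb the $n=1$ term into the integral $\int_0^N t^b\,dt$ (using that it converges at $0$ for $b>-1$) while the paper peels it off and invokes $1 \leq \tfrac{1}{1+b}$ separately --- the same fact in different clothing.
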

     \begin{proof}
         We start with the proof of \eqref{mb_eq:smalltail}. For $a > 1$, the function $\R \ni x \mapsto x^{-a}$ is decreasing in $x$ and therefore
         \begin{equation*}
             \sum_{n = N+2}^\infty n^{-a} \leq \int_{N+1}^\infty x^{-a}\, dx = \frac{1}{a-1} (N+1)^{1-a}, 
         \end{equation*}
         for any $N \in \N$. Additionally, we use that $(N+1)^{-a} \leq (N+1)^{1-a}$ for any $N \in \N$ to show that
         \begin{equation*}
             \sum_{n = N+1}^\infty n^{-a} = (N+1)^{-a} + \sum_{n = N+2}^\infty n^{-a} \leq \left(1 + \frac{1}{a-1}\right) (N+1)^{1-a} = \frac{a}{a-1} (N+1)^{1-a}.
         \end{equation*}
         The claim \eqref{mb_eq:finitesumsmalltail} with $0 \geq b > -1$ follows from the analogue integral bound using that $\R \ni x \mapsto x^{b}$ is not increasing in $x$ and therefore
         \begin{equation*}
             \sum_{n = 2}^N n^{b} \leq \int_{1}^N x^b = \frac{1}{1+b}\left(N^{1+b}-1\right).
         \end{equation*}
         Since further $1/(1+b) \geq 1$ this yields
         \begin{equation*}
             \sum_{n = 1}^N n^{b} = 1 + \sum_{n = 2}^N n^{b} \leq \frac{1}{1+b} N^{1+b}.
         \end{equation*}
         The bound \eqref{mb_eq:finitesumlargetail} is a direct consequence of $n^b \leq N^{b}$ for all $n = 1,\hdots,N$. 
     \end{proof}
    With Lemma \ref{mb_lem:hilfslemmadecay} we can show the decay rate for fixed noise distributions, which directly implies Theorem \ref{mb_thm:convratedecayrates}.
     \begin{lemma}\label{mb_lem:convratedoubleexp}
         Let Assumption \ref{mb_assmp:decayrates} hold, i.e., $\Pi_n = O(n^{-a})$ with $a > 1$ and $\Delta_n/\sigma_n^2 = O(\delta^2 n^{b})$ with $b \in \R$. If $b > -1$, it holds that
         \begin{equation*}
             \sum_{n \in \N} \frac{\Pi_n \, \Delta_n}{\Pi_n \sigma_n^2 + \Delta_n} \lesssim \delta^{2\frac{a-1}{a + b}},
         \end{equation*}
         where the constant depends on $a$ and $b$.
         If $b < -1$, {  we have}
         \begin{equation*}
             \sum_{n \in \N} \frac{\Pi_n \, \Delta_n}{\Pi_n \sigma_n^2 + \Delta_n} \lesssim \delta^{2},
         \end{equation*}
         where the constant depends on $b$.
     \end{lemma}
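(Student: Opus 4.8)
The plan is to exploit the mixed estimate \eqref{mb_eq:splitsum}, which for any free cut-off $N \in \N$ bounds the target sum by a finite ``noise'' part and an infinite ``data tail'',
\begin{equation*}
\sum_{n \in \N} \frac{\Pi_n \, \Delta_n}{\Pi_n \sigma_n^2 + \Delta_n} \leq \sum_{n = 1}^N \frac{\Delta_n}{\sigma_n^2} + \sum_{n = N+1}^\infty \Pi_n.
\end{equation*}
Inserting Assumption \ref{mb_assmp:decayrates}, the first sum is controlled by $\delta^2 \sum_{n=1}^N n^b$ and the second by $\sum_{n=N+1}^\infty n^{-a}$. For $b > -1$ the partial-sum bounds \eqref{mb_eq:finitesumsmalltail}--\eqref{mb_eq:finitesumlargetail} (distinguishing $-1 < b \leq 0$ from $b > 0$, both of which give $\sum_{n=1}^N n^b \lesssim N^{1+b}$) together with the tail bound \eqref{mb_eq:smalltail}, which yields $\sum_{n=N+1}^\infty n^{-a} \lesssim N^{1-a}$, reduce the whole estimate to the elementary expression
\begin{equation*}
\sum_{n \in \N} \frac{\Pi_n \, \Delta_n}{\Pi_n \sigma_n^2 + \Delta_n} \lesssim \delta^2 N^{1+b} + N^{1-a}.
\end{equation*}

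Next I would balance the two terms in $N$. Treating $N$ as a continuous variable, the right-hand side is minimized when $\delta^2 N^{1+b}$ and $N^{1-a}$ are of the same order, i.e.\ when $N^{a+b} \sim \delta^{-2}$, so the natural choice is $N \sim \delta^{-2/(a+b)}$. Here $a + b > 0$ (since $a > 1$ and $b > -1$) and $1 + b > 0$, so this exponent is well defined and positive. Substituting this $N$ into either term gives
\begin{equation*}
\delta^2 N^{1+b} \sim N^{1-a} \sim \delta^{2\frac{a-1}{a+b}},
\end{equation*}
which is exactly the claimed rate. To make this rigorous one takes $N = \lceil \delta^{-2/(a+b)} \rceil$; the rounding only affects the implied constants, not the exponent, since replacing $N$ by $N+1$ changes each power of $N$ by a bounded factor.

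Finally, the case $b < -1$ needs no splitting at all. Here $\sum_{n \in \N} n^b$ converges, so the cruder estimate \eqref{mb_eq:deltabound} already gives $\sum_n \Pi_n\Delta_n/(\Pi_n\sigma_n^2+\Delta_n) \leq \sum_n \Delta_n/\sigma_n^2 \lesssim \delta^2 \sum_n n^b \lesssim \delta^2$, with the constant depending only on $b$ through the value of the convergent series. The main obstacle I anticipate is not conceptual but bookkeeping: one must verify that the $O(\cdot)$ constants from Assumption \ref{mb_assmp:decayrates} together with the integer-rounding of $N$ can be absorbed uniformly into the $\lesssim$, and that the balancing argument is legitimate precisely on the range $a > 1$, $b > -1$ where both exponents $1+b$ and $a-1$ are positive; the boundary case $b = -1$ is exactly where this optimization degenerates.
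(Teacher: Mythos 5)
Your proposal is correct and follows essentially the same route as the paper's proof in Appendix~A: the same split-sum estimate \eqref{mb_eq:splitsum}, the same partial-sum and tail bounds from Lemma~\ref{mb_lem:hilfslemmadecay}, and the same balancing choice $N \sim \delta^{-2/(a+b)}$ (the paper handles the integer rounding by bounding both terms via an intermediate $x \in [N, N+1]$, which is equivalent to your ceiling argument), with the $b < -1$ case dispatched identically via \eqref{mb_eq:deltabound}.
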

     \begin{proof}
     We start with the case $b > -1$.
        Inserting Assumption \ref{mb_assmp:decayrates} into \eqref{mb_eq:splitsum}, we get
         \begin{equation*}
             \sum_{n \in \N} \frac{\Pi_n \, \Delta_n}{\Pi_n \sigma_n^2 + \Delta_n} \lesssim  
        \delta^2\,\sum_{n = 1}^N n^{b} + \sum_{n = N+1}^\infty n^{-a} \lesssim \delta^2\,N^{1+b} + (N+1)^{1-a},
         \end{equation*}
     where the last bound follows from Lemma \ref{mb_lem:hilfslemmadecay}.
         Since the first summand is decreasing in $N$ and the second summand is increasing in $N$, we can estimate
         \begin{equation*}
             \delta^2\,N^{1+b} + (N+1)^{1-a} \leq \delta^2\,x^{1+b} + x^{1-a}
         \end{equation*}
         for any $x \in [N, N+1]$, $N \in \N$. For $\delta$ small enough we can choose $N > 1$ such that 
         \begin{equation*}
             \delta^{-\frac{2}{a+b}} \in [N, N+1]
         \end{equation*}
         and therefore 
         \begin{equation*}
             \sum_{n \in \N} \frac{\Pi_n \, \Delta_n}{\Pi_n \sigma_n^2 + \Delta_n} \lesssim \delta^2\,\,\delta^{-2\frac{1+b}{a+b}} + \delta^{2\frac{a-1}{a+b}} = 2 \,\delta^{2\frac{a-1}{a+b}}.
         \end{equation*}
         Absorbing the factor $2$ into the proportionality constant, this proves the claim for $b > -1$.
         For $b < -1$, Assumption \ref{mb_assmp:decayrates} yields that $\sum_{n \in \N} \Delta_n/\sigma_n^2 \lesssim \delta^2$, thus the decay rate follows directly from \eqref{mb_eq:deltabound}. 
     \end{proof}

\section*{Appendix B}
\addcontentsline{toc}{section}{Appendix B}
The proof of Theorem \ref{mb_thm:convratesourcecondition} is a direct consequence of the following Lemma.
     \begin{lemma}\label{mb_lem:decaysourcecondition}
         Let Assumption \ref{mb_assmp:sourcecondition} hold. Then it holds that 
         \begin{align*}
             \sum_{n \in \N} \frac{\Pi_n \, \Delta_n}{\Pi_n \sigma_n^2 + \Delta_n} \lesssim \delta^{2\frac{2\mu}{1+2\mu}},
         \end{align*}
         where the constant is given by \eqref{mb_eq:noisedatasum}.
     \end{lemma}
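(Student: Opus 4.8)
The plan is to reduce the whole estimate to a single pointwise interpolation inequality applied to each summand, and then to choose the interpolation exponent so that all powers of the singular values $\sigma_n$ cancel, leaving precisely the finite series assumed in \eqref{mb_eq:noisedatasum}.

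First I would record the elementary inequality that for nonnegative reals $p,q$ and any $\theta \in [0,1]$,
\[
\frac{pq}{p+q} \leq p^{1-\theta}\,q^{\theta},
\]
with the convention that the left-hand side is $0$ when $p+q=0$. For $p,q>0$ this is equivalent, after multiplying by $p+q$ and dividing by $p^{1-\theta}q^{\theta}$, to $p^{\theta}q^{1-\theta}\leq p+q$, which follows from weighted AM--GM, $p^{\theta}q^{1-\theta}\leq \theta p + (1-\theta)q \leq p+q$. Applying this with $p = \Pi_n \sigma_n^2$ and $q = \Delta_n$, and pulling out the prefactor $\sigma_n^{-2}$, yields
\[
\frac{\Pi_n \, \Delta_n}{\Pi_n \sigma_n^2 + \Delta_n} = \frac{1}{\sigma_n^2}\cdot\frac{(\Pi_n\sigma_n^2)\,\Delta_n}{\Pi_n\sigma_n^2 + \Delta_n} \leq \sigma_n^{-2\theta}\,\Pi_n^{1-\theta}\,\Delta_n^{\theta}.
\]

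Next I would insert the source-condition form $\Pi_n = \sigma_n^{4\mu}\beta_n$ and $\Delta_n = \delta^2\gamma_n$ from Assumption \ref{mb_assmp:sourcecondition}. This produces an overall factor $\sigma_n^{-2\theta + 4\mu(1-\theta)}$, and the key step is to kill this factor: setting the exponent to zero forces the unique choice $\theta = \frac{2\mu}{1+2\mu}$, hence $1-\theta = \frac{1}{1+2\mu}$, which lies in $[0,1]$ precisely because $\mu \geq 0$. With this $\theta$ the singular values disappear and each summand is bounded by $\delta^{2\frac{2\mu}{1+2\mu}}\,\beta_n^{\frac{1}{1+2\mu}}\gamma_n^{\frac{2\mu}{1+2\mu}}$. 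Summing over $n \in \N$ and invoking \eqref{mb_eq:noisedatasum} then gives the claimed estimate with constant $c$.

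The only genuine subtlety — and the point I would be most careful to justify — is that the chosen $\theta$ is forced: it is the unique exponent that simultaneously lies in the admissible range $[0,1]$ and annihilates the $\sigma_n$-powers, so that the residual series is exactly the one assumed finite in \eqref{mb_eq:noisedatasum}; everything else is a direct substitution. I would also note the degenerate endpoint $\mu = 0$, where $\theta = 0$ and the bound collapses to $\sum_{n\in\N}\Pi_n \leq c$, consistent with the non-decaying constant order $\delta^0$.
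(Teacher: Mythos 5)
Your proof is correct and follows essentially the same route as the paper: both bound each summand by the interpolation inequality $\frac{pq}{p+q}\leq p^{1-\theta}q^{\theta}$ with $p=\Pi_n\sigma_n^2$, $q=\Delta_n$ and $\theta=\frac{2\mu}{1+2\mu}$ chosen to cancel the powers of $\sigma_n$, then sum using \eqref{mb_eq:noisedatasum}. The paper phrases the pointwise step as splitting $(\Pi_n\sigma_n^2+\Delta_n)^{\frac{\nu}{1+\nu}+\frac{1}{1+\nu}}$ and bounding each factor from below rather than invoking weighted AM--GM, but the resulting estimate is identical.
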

     \begin{proof}
     
         For $n \in \N$ and $\nu = 2\mu$ we can write
         \begin{equation*}
             \frac{\Pi_n \, \Delta_n}{\Pi_n \sigma_n^2 + \Delta_n} = \frac{\Pi_n \, \Delta_n}{\left(\Pi_n \sigma_n^2 + \Delta_n\right)^{\frac{\nu}{1+\nu}+\frac{1}{1+\nu}}} \leq \frac{\Pi_n\,\Delta_n}{\left(\Pi_n \sigma_n^2\right)^{\frac{\nu}{1+\nu}}\,\Delta_n^{\frac{1}{1+\nu}}}.
         \end{equation*}
         Inserting the assumption on $\Pi_n$ and $\Delta_n$ this yields
         \begin{equation*}
             \frac{\Pi_n \, \Delta_n}{\Pi_n \sigma_n^2 + \Delta_n} \leq \frac{\sigma_n^{2\nu}\beta_n\,\Delta_n}{\sigma_n^{2\nu} \beta_n^{\frac{\nu}{1+\nu}}\,\Delta_n^{\frac{1}{1+\nu}}} = \beta_n^{\frac{1}{1+\nu}}\frac{\delta^2 \gamma_n}{\delta^{2\frac{1}{1+\nu}}\gamma_n^{\frac{1}{1+\nu}}} = \delta^{2\frac{\nu}{1+\nu}}\, \beta_n^{\frac{1}{1+\nu}} \gamma_n^{\frac{\nu}{1+\nu}}.
         \end{equation*}
         The claim now follows from inserting $\nu = 2\mu$ and \eqref{mb_eq:noisedatasum}.
            \end{proof}

\section*{Appendix C}
\addcontentsline{toc}{section}{Appendix C}
This sections give a short proof of Lemma \ref{lm:fR_MB} and Lemma \ref{lm:SL_MB} . For this we need the following observation.
\begin{lemma}\label{lm: H1_MB}
Let $d\in l^1$. Then
\begin{align*}
    f: g\in l^\infty\mapsto \sum_n g_n^2 d_n \in \R 
\end{align*}
is weak* lower semicontinuous.
\end{lemma}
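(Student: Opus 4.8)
The plan is to work directly with the sequential characterization of weak* convergence on $\ell^\infty = (\ell^1)^*$ and reduce the claim to a dominated-convergence argument for the counting measure. So I would start from a weak*-convergent sequence $g^{(k)} \rightharpoonup^* g$ in $\ell^\infty$ and first extract the two structural consequences of weak* convergence that the argument needs. Testing against the unit vectors $e_m \in \ell^1$ shows that $\langle g^{(k)}, e_m\rangle = g_m^{(k)} \to g_m$ for every fixed $m$, i.e. the convergence is coordinatewise. Testing against an arbitrary $c \in \ell^1$ shows that $(\langle g^{(k)}, c\rangle)_k$ is bounded, so by the uniform boundedness principle on the Banach space $\ell^1$ there is a constant $M := \sup_k \|g^{(k)}\|_{\ell^\infty} < \infty$, with $\|g\|_{\ell^\infty} \le M$ as well.

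Next I would pass to the limit in the series term by term. For each fixed $n$, coordinatewise convergence gives $(g_n^{(k)})^2 d_n \to g_n^2 d_n$, while the uniform bound furnishes the integrable majorant
\[
\bigl|(g_n^{(k)})^2 d_n\bigr| \le M^2 |d_n|, \qquad \sum_{n} M^2 |d_n| = M^2 \|d\|_{\ell^1} < \infty.
\]
The dominated convergence theorem for the counting measure then yields $f(g^{(k)}) = \sum_n (g_n^{(k)})^2 d_n \to \sum_n g_n^2 d_n = f(g)$. Hence $f$ is weak* sequentially continuous, and in particular weak* sequentially lower semicontinuous, which is exactly what is needed in Lemma \ref{lm:fR_MB} and Lemma \ref{lm:SL_MB}, where $f$ is only ever evaluated along (minimizing) sequences contained in bounded sublevel sets.

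I expect the only genuine obstacle to be the uniform control of the tail of the series, since coordinatewise convergence by itself does not license interchanging the limit with the infinite sum; this is precisely what the uniform bound $M$ supplied by Banach--Steinhaus repairs, through the $\ell^1$-majorant above. For completeness I would append a remark covering the sign-definite case $d_n \ge 0$ relevant to the applications: there one can bypass dominated convergence altogether and obtain genuine (net) weak* lower semicontinuity, because $f = \sup_{N} \sum_{n=1}^N g_n^2 d_n$ is then a pointwise supremum of the weak*-continuous finite partial sums, and a supremum of continuous functions is lower semicontinuous. Restricting to bounded sets, where the weak* topology is metrizable since $\ell^1$ is separable, reconciles the sequential and topological formulations.
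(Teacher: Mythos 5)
Your proof is correct, but it takes a genuinely different route from the paper's. The paper exploits convexity of $x\mapsto x^2$ to write $x^2=\sup_i(a_ix+b_i)$ as a supremum of affine functions, so that $f(g)=\sup_{m}\sum_n(a_{m_n}g_n+b_{m_n})d_n$ becomes a pointwise supremum of weak*-continuous affine functionals (each well defined because $a_{m_n}$ is bounded and $d\in\ell^1$), and then invokes the fact that a supremum of continuous functions is lower semicontinuous; this yields lower semicontinuity in the genuine topological (net) sense, but the inequality $g_n^2d_n\geq(a_{m_n}g_n+b_{m_n})d_n$ tacitly requires $d_n\geq 0$, which does hold in the application since there $d_n=(\sigma_n\langle x,u_n\rangle+\langle\varepsilon,v_n\rangle)^2$. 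Your argument instead combines Banach--Steinhaus, coordinatewise convergence, and dominated convergence with the majorant $M^2|d_n|$ to obtain sequential weak* \emph{continuity} for arbitrary signed $d\in\ell^1$ --- a sharper conclusion along sequences than the lemma asserts, and valid without a sign assumption --- at the price of being a priori only sequential. You correctly close that gap where it matters: sublevel sets are norm-bounded, the weak* topology is metrizable there since $\ell^1$ is separable, so the sequential statement suffices for the direct method in Lemmas \ref{lm:fR_MB} and \ref{lm:SL_MB}; and your closing remark for $d_n\geq 0$ (writing $f$ as the supremum of the weak*-continuous finite partial sums) is essentially a streamlined version of the paper's own supremum argument. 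In short: the paper's proof buys topological lower semicontinuity under a hidden positivity assumption, yours buys sequential continuity in full generality, and either serves the downstream application.
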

\begin{proof}
The function $x \mapsto x^2$ is convex and thus there exist  sequences $a_i$ and $b_i$ in $\R$ such that $x^2=\sup_i a_i x +b_i$ for all $x\in \R$.  We denote by $\Gamma$ the set of all sequences $m_n$ with values in $\N$ such that $\|m\|_\infty \leq C$ for some $C\in \N$. Then
clearly $$ f(g)\geq \sup_{m\in \Gamma} \sum_n (a_{m_n}g_n+b_{m_n}) d_n,$$
but in particular equality holds.
To see this we can choose a $N$ such that $\sum_{n>N} g_n^2 d_n < \frac{\varepsilon}{3}$. Then we choose the first $N$ components of the sequence $m$ such that $\sum_{1\leq n\leq N} g_n^2 d_n - (a_{m_n} g_n + b_{m_n})d_n \leq \frac{\varepsilon}{3}$. Then
$$0\leq f(g)- \sum_n (a_{m_n}g_n+b_{m_n}) d_n \leq \varepsilon.$$
Since for any $m\in\Gamma$ we have $\|m\|_\infty$, the sequence $a_{m_n}$ is bounded as well
and
$$ \Tilde{f}_m(g)=\sum_n (a_{m_n} g_n +b_{m_n}) d_n $$ is weak* lower semi continuous.
The weak* lower semicontinuity of $f$ then follows from the fact that
taking the supremum preserves weak* lower semicontinuity.
\end{proof}

\begin{proof}[ Proof of Lemma \ref{lm:fR_MB}]
The continuity in $x$ and $\varepsilon$ follows from the fact that $A^\dagger A$ , $A$ and $R[g]$ are continuous Operators. 
For the weak* lower semi continuity we write
\begin{align*}
    &\|R[g](Ax+\varepsilon) -A^\dagger Ax\|^2\\= &\sum_n  (1-\sigma_n g_n)^2\langle x, u_n\rangle^2 +g_n^2\langle \varepsilon,v_n\rangle^2
    -2(1-\sigma_n g_n)g_n\langle x, u_n\rangle\langle \varepsilon,v_n\rangle\\
    = & \sum_n \underbrace{\langle x,u_n \rangle^2}_{\eqqcolon a_n} + g_n\underbrace{\left(-2 \sigma_n\langle x,u_n\rangle^2 -2 \langle x,u_n\rangle \langle\varepsilon,v_n\rangle\right)}_{\eqqcolon b_n}\\
    & \qquad  +g_n^2 \underbrace{\left(\sigma_n^2\langle x,u_n\rangle^2+2\sigma_n \langle x,u_n\rangle \langle\varepsilon,v_n\rangle + \langle\varepsilon,v_n\rangle^2 \right)}_{\eqqcolon c_n}.
\end{align*}
Since the sequences $(a_n)_{n\in\N}, (b_n)_{n\in\N}, (c_n)_{n\in\N}$ are in $l^1$ we immediately obtain weak* continuity for the constant and linear part. For the squared part we use Lemma \ref{lm: H1_MB} to obtain weak* lower semicontinouity continuity. Since $\title{f}$ is defined as a supremum over functions the lower semicontinuity $f$ is immediately transferred to $\tilde{f}$ implying that $\tilde{f}$ is weak* lower semicontinuous in $g$ and lower semicontinuous in $x$. The lower semicontinuous in $x$ yields the Borel measurability in $x$.
Lastly for a sequence $ (g_m)_{n\in\N} \subset l^\infty$ weakly* converging to $g\in l^\infty$ we obtain by Fatou's lemma
\begin{align*}
    \liminf_{m\rightarrow \infty} \mathbb{E}_{x\sim P_x} \sup_{\varepsilon \leq \delta} \| R[g_m](Ax+\varepsilon)-A^\dagger Ax\|^2 \\
    \geq \mathbb{E}_{x\sim P_x} \liminf_{m\rightarrow \infty} \sup_{\varepsilon \leq \delta} \| R[g_m](Ax+\varepsilon)-A^\dagger Ax\|^2\\
    \geq \mathbb{E}_{x\sim P_x} \sup_{\varepsilon \leq \delta} \| R[g](Ax+\varepsilon)-A^\dagger Ax\|^2.
\end{align*}
\end{proof}

\begin{proof}[Proof of Lemma \ref{lm:SL_MB}]
    We estimate
    \begin{align*}
        \sup_{\|\varepsilon\| \leq \delta} \| R[g](Ax+\varepsilon)-A^\dagger Ax\|^2\\
        =\sup_{\|\varepsilon\| \leq \delta} \sum_n \underbrace{(1-\sigma_n g_n)^2 \langle x, u_n\rangle^2}_{\geq0} \underbrace{-2 (1-\sigma_n g_n) g_n\langle x,u_n\rangle \langle \varepsilon, v_n\rangle}_{\eqqcolon r_m} +g_n^2 \langle \varepsilon, v_n\rangle^2\\
        \stackrel{(*)}{\geq} \sup_{\|\varepsilon\| \leq \delta} \sum_n g_n^2 \langle \varepsilon, v_n \rangle^2= \delta^2 \|g\|_\infty^2.
    \end{align*}
    To see $(*)$ we simply observe that in order to approximate the first supremum the 
     $\sign$ of $\langle \varepsilon, v_n\rangle$ should be chosen such that the term $r_m$ is non negative. Therefore 
     \begin{align*}
         \mathbb{E}_{x\sim P_x} \sup_{\|\varepsilon\| \leq \delta} \| R[g](Ax+\varepsilon)-A^\dagger Ax\|^2 \geq \delta^2 \|g\|^2_\infty
     \end{align*}
     and sublevels are by Banach-Alaoglu theorem sequentially weakly* compact.  
\end{proof}

\section*{Appendix D}
\addcontentsline{toc}{section}{Appendix C}
\begin{proof}[ Proof of Theorem \ref{thm: AdInftyVal}]
    Inserting for each $x\in X$ the corresponding $w^{max}$ into $f(x,w)$, the $g_n^{adv_\infty}$ have to minimize the term
\begin{align*}
        \sum_n\underbrace{\mathbb{E}_{x\sim P_x} (1-\sigma_n g_n)^2 \langle x , u_n\rangle^2 +2\delta|1-\sigma_n g_n| |g_n| |\langle x ,u_n\rangle| + g_n^2 \delta^2}_{\eqqcolon S_n(g_n)\geq 0}.
    \end{align*}
Each summand $S_n(g_n)$ is is non-negative and can thus be optimized independently. We observe that $\mathbb{E}_{x\sim P_x}|\langle x, u_n\rangle|=0$ if and only if $\mathbb{E}_{x\sim P_x}\langle x, u_n\rangle^2=0$. In this case the optimal $g_n$ is given by $0$ and we exclude this case in the following considerations.\\
Keeping Lemma \ref{lm: GRange_MB} in mind, the $g_n$ of minimizers have to lie in the interval $\left[0,\frac{1}{\sigma_n}\right]$. We thus need to check the two boundary points $0$ and $\frac{1}{\sigma_n}$ as well as possible critical points in the interior of the interval itself. The summand $S_n$ is differentiable in the open interval $\left(0,\frac{1}{\sigma_n}\right)$ and by differentiation we obtain the optimality conditions
\begin{align*}
    \frac{d S_n}{d g_n}=&-2 \sigma_n(1-\sigma_n g_n) \mathbb{E}_{x\sim P_x}\langle x, u_n\rangle^2-2 \delta\sigma_n g_n \mathbb{E}_{x\sim P_x}|\langle x, u_n\rangle|\\
    &+2\delta(1-\sigma_n g_n)\mathbb{E}_{x\sim P_x}|\langle x, u_n\rangle|+2g_n \delta^2=0.
\end{align*}
Then a possible extreme point is given by 
\begin{align*}
g_n&=\frac{\sigma_n\mathbb{E}_{x\sim P_x}\langle x,u_n\rangle^2-\delta \mathbb{E}_{x\sim P_x}|\langle x,u_n\rangle|}{\sigma_n^2\mathbb{E}_{x\sim P_x}\langle x,u_n\rangle^2-2\delta \sigma_n \mathbb{E}_{x\sim P_x}|\langle x,u_n\rangle|+\delta^2}\\
&=\frac{1}{\sigma_n-\delta \frac{\sigma_n \mathbb{E}_{x\sim P_x} |\langle x,u_n\rangle|-\delta}{\sigma_n \mathbb{E}_{x\sim P_x} \langle x,u_n\rangle^2-\delta \mathbb{E}_{x\sim P_x} |\langle x,u_n\rangle|}}.
\end{align*}
For a better understanding of this critical point we have to study the denominator 
\begin{align*}
    D\coloneqq\sigma_n -\underbrace{\frac{\delta}{\mathbb{E}_{x\sim P_x} |\langle x,u_n\rangle|}}_{\coloneqq A}\cdot\underbrace{ \frac{\sigma_n \mathbb{E}_{x\sim P_x} |\langle x,u_n\rangle| -\delta }{\sigma_n \frac{\mathbb{E}_{x\sim P_x} \langle x,u_n\rangle^2}{\mathbb{E}_{x\sim P_x} |\langle x,u_n\rangle|}-\delta}}_{\coloneqq B}
\end{align*}
and use the flowing estimates. By Cauchy--Schwartz inequality 
\begin{align*}
    \mathbb{E}_{x\sim P_x} |\langle x,u_n\rangle|\leq\frac{\mathbb{E}_{x\sim P_x} \langle x,u_n\rangle^2}{\mathbb{E}_{x\sim P_x} |\langle x,u_n\rangle|}
\end{align*}
and thus 
\begin{align}\label{eq: CS_MB}
    \sigma_n \mathbb{E}_{x\sim P_x} |\langle x,u_n\rangle| -\delta \leq \sigma_n \frac{\mathbb{E}_{x\sim P_x} \langle x,u_n\rangle^2}{\mathbb{E}_{x\sim P_x} |\langle x,u_n\rangle|}-\delta.
\end{align}
We now have to distinguish three cases:\\[3pt]
\textbf{ Case $\frac{\delta}{\sigma_n}\leq\mathbb{E}_{x\sim P_x} |\langle x,u_n\rangle|$:}
It immediately follows that $0\leq A\leq \sigma_n$ and that the enumerator of $B$ is non-negative. Then by \eqref{eq: CS_MB} it follows $0\leq B\leq 1$ and thus $\frac{1}{D}\geq \frac{1}{\sigma_n}$. We conclude that a minimizer has to lie on the boundary.
We observe that we can estimate 
\begin{align*}
     S_n(g_n)&=(1-\sigma_n g_n)^2 \mathbb{E}_{x\sim P_x}\langle x , u_n\rangle^2 +\delta(1-\sigma_n g_n)g_n \mathbb{E}_{x\sim P_x}|\langle x ,u_n\rangle| + g_n^2 \delta^2\\
     &\geq (1-\sigma_n g_n)^2 \left(\mathbb{E}_{x\sim P_x}|\langle x , u_n\rangle|\right)^2 +\delta(1-\sigma_n g_n)g_n \mathbb{E}_{x\sim P_x}|\langle x ,u_n\rangle| + g_n^2 \delta^2\\
     &\geq (1-\sigma_n g_n)^2 \frac{\delta^2}{\sigma^2}+\delta (1-\sigma_n g_n) \frac{\delta}{\sigma_n}+\delta^2 g_n^2=\delta^2\frac{1}{\sigma_n^2}.
\end{align*}
This minimal bound is obtained if $g_n^{adv_\infty}=\frac{1}{\sigma_n}$ is chosen.\\[3pt]
\textbf{Case $\mathbb{E}_{x\sim P_x} |\langle x,u_n\rangle|< \frac{\delta}{\sigma_n}< \frac{\mathbb{E}_{x\sim P_x} \langle x, u_n \rangle^2}{\mathbb{E}_{x\sim P_x} |\langle x, u_n \rangle|}$:} It immediately follows that the enumerator of $B$ is negative and its denominator positive, thus $B<0$. Since $A\geq 0$ it follows that $\frac{1}{D}\in (0, \frac{1}{\sigma_n})$. To see that this critical point is truly a minimum we calculate the second derivative with respect to $g_n$ and estimate
\begin{align*}
    \frac{d^2S_n}{d g_n^2}(g_n)&= 2\sigma_n^2\mathbb{E}_{x\sim P_x}\langle x, u_n\rangle^2 -4 \delta \sigma_n \mathbb{E}_{x\sim P_x}|\langle x,u_n\rangle| +2\delta^2 \\
    &> 2 \sigma_n \delta \mathbb{E}_{x\sim P_x}|\langle x, u_n\rangle|-4 \delta \sigma_n \mathbb{E}_{x\sim P_x}|\langle x, u_n\rangle| +2 \sigma_n \delta \mathbb{E}_{x\sim P_x}|\langle x, u_n\rangle|=0.
\end{align*}
Thus the critical point is the true minimum.\\[3pt]
\textbf{Case $\mathbb{E}_{x\sim P_x} |\langle x,u_n\rangle|\leq\frac{\mathbb{E}_{x\sim P_x} \langle x, u_n \rangle^2}{\mathbb{E}_{x\sim P_x} |\langle x, u_n \rangle|}\leq\frac{\delta}{\sigma_n}$:} It immediately follows that $A\geq \sigma_n$ and that the denominator of $B$ is non-positive. Then by \eqref{eq: CS_MB} it follows $B\geq 1$ and thus $\frac{1}{D}\leq 0$.
We conclude that a minimizer has to lie on the boundary.
We observe that we can estimate
\begin{align*}
    S_n(g_n)=&(1-\sigma_n g_n)^2 \mathbb{E}_{x\sim P_x}\langle x , u_n\rangle^2 +\delta(1-\sigma_n g_n)g_n \mathbb{E}_{x\sim P_x}|\langle x ,u_n\rangle| + g_n^2 \delta^2\\
    \geq& (1-\sigma_n g_n)^2 \mathbb{E}_{x\sim P_x}\langle x , u_n\rangle^2 +(1-\sigma_n g_n) \sigma_n g_n \mathbb{E}_{x\sim P_x}\langle x ,u_n\rangle^2\\
    &+ g_n^2 \sigma_n^2 \left(\frac{\mathbb{E}_{x\sim P_x}\langle x , u_n\rangle^2}{\mathbb{E}_{x\sim P_x}|\langle x , u_n\rangle|} \right)^2\\
    \geq& \mathbb{E}_{x\sim P_x}\langle x , u_n\rangle^2 .
\end{align*}
This minimal bound is obtained if $g_n^{adv_\infty}=0$ is chosen.
\end{proof}
\end{document}